\newtheorem{thm}{Theorem}[section]
\newtheorem{prop}[thm]{Proposition}
\newtheorem{lem}[thm]{Lemma}
\theoremstyle{definition}
\newtheorem{rem}[thm]{Remark}
\newcommand{\Z}{\mathbb{Z}}
\newcommand{\M}{\mathcal{M}}
\newcommand{\T}{\mathcal{T}}
\newcommand{\C}{\mathcal{C}}
\newcommand{\F}{\mathcal{F}}
\newcommand{\PP}{\mathcal{P}_\ast}
\newcommand{\CP}{\mathcal{P}_\otimes}
\begin{document}

\title[An infinite presentation for $\T(N_{g,n})$]
{An infinite presentation for the twist subgroup of the mapping class group of a compact non-orientable surface}

\author[$^\ast$R. Kobayashi]{$^\ast$Ryoma Kobayashi}
\address[$^\ast$R. Kobayashi]{
Department of General Education,\endgraf
National Institute of Technology, Ishikawa College,\endgraf
Tsubata, Ishikawa, 929-0392, Japan
}
\email{kobayashi\_ryoma@ishikawa-nct.ac.jp}

\author[G. Omori]{Genki Omori}
\address[G. Omori]{
Department of Mathematics,\endgraf
Faculty of Science and Technology,\endgraf
Tokyo University of Science,\endgraf
Noda, Chiba, 278-8510, Japan
}
\email{omori\_genki@ma.noda.tus.ac.jp}

\subjclass[2010]{Primary 20F05, Secondary 57M07.}
\keywords{mapping class group, twist subgroup, presentation.}
\thanks{The first author was supported by JSPS KAKENHI Grant Number JP19K14542 and 22K13920.
The second author was supported by JSPS KAKENHI Grant Number JP19K23409 and JP21K13794.}
\thanks{Data sharing not applicable to this article as no datasets were generated or analysed during the current study.}

\maketitle

\begin{abstract}
A finite presentation for the subgroup of the mapping class group of a compact non-orientable surface generated by all Dehn twists was given by Stukow~\cite{St4}.
In this paper, we give an infinite presentation for this group, mainly using the presentation given by Stukow~\cite{St4} and Birman exact sequences on mapping class groups of non-orientable surfaces.
\end{abstract}

\section{Introduction}

\subsection{Background}\label{BG}\

For $g\geq1$ and $n\geq0$, let $N_{g,n}$ denote a compact non-orientable surface of genus $g$ with $n$ boundary components, that is, $N_{g,n}$ is a surface obtained by removing $n$ open disks from a connected sum of $g$ real projective planes.
For $g\geq0$ and $n\geq0$, let $\Sigma_{g,n}$ denote a compact orientable surface of genus $g$ with $n$ boundary components, that is, $\Sigma_{g,n}$ is a surface obtained by removing $n$ open disks from a connected sum of $g$ tori.
As shown in Figure~\ref{non-ori-surf}, we can regard $N_{g,n}$ as a surface obtained by attaching $g-2h$ M\"obius bands to $g-2h$ boundary components of $\Sigma_{h,n+g-2h}$, for $\displaystyle0\leq{h}<\frac{g}{2}$.
We call these attached M\"obius bands \textit{crosscaps}.

\begin{figure}[htbp]
\includegraphics{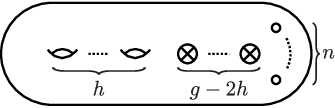}
\caption{A model of a non-orientable surface $N_{g,n}$.}\label{non-ori-surf}
\end{figure}

The mapping class group of $N_{g,n}$, denoted by $\M(N_{g,n})$, is the group consisting of isotopy classes of all diffeomorphisms of $N_{g,n}$ which fix the boundary pointwise.
The mapping class group of $\Sigma_{g,n}$, denoted by $\M(\Sigma_{g,n})$, is the group consisting of isotopy classes of all \textit{orientation-preserving} diffeomorphisms of $\Sigma_{g,n}$ which fix the boundary pointwise.
It is well known that $\M(\Sigma_{g,n})$ is generated by only \textit{Dehn twists} (see \cite{D1, Li2, D2}).
On the other hand, $\M(N_{g,n})$ can not be generated by only Dehn twists.
As generators of $\M(N_{g,n})$, other than Dehn twists, \textit{crosscap slides} or \textit{crosscap transpositions} are needed (see \cite{Li1, Li3}).
Let us consider the subgroup of $\M(N_{g,n})$ generated by all Dehn twists, denoted by $\T(N_{g,n})$.
We call $\T(N_{g,n})$ the \textit{twist subgroup} of $\M(N_{g,n})$.

We now explain about the history of studies on presentations for $\M(\Sigma_{g,n})$, $\M(N_{g,n})$ and $\T(N_{g,n})$.
Finite presentations for $\M(\Sigma_{g,n})$ were given by Hatcher-Thurston~\cite{HT} and Harer~\cite{H}, and subsequently simplified by Wajnryb~\cite{W} and Matsumoto~\cite{M} for $n\leq1$.
Gervais~\cite{G2} and Labru\`ere-Paris~\cite{LP} gave finite presentations of $\M(\Sigma_{g,n})$ for $n\geq2$.
Gervais~\cite{G1} gave an infinite presentation for $\M(\Sigma_{g,n})$ by using the presentation for $\M(\Sigma_{g,n})$ given in \cite{H, W}, and then Luo~\cite{Lu} simplified its presentation.
Finite presentations for $\M(N_{2,0})$, $\M(N_{2,1})$, $\M(N_{3,0})$ and $\M(N_{4,0})$ ware given by \cite{Li1}, \cite{St1}, \cite{BC} and \cite{Sz} respectively.
Note that $\M(N_{1,0})$ and $\M(N_{1,1})$ are trivial (see \cite{E}).
Paris-Szepietowski~\cite{PS} gave a finite presentation of $\M(N_{g,n})$ for $g+n>3$ with $n\leq1$.
Stukow~\cite{St3} gave another finite presentation of $\M(N_{g,n})$ for $g+n>3$ with $n\leq1$, applying Tietze transformations for the presentation of $\M(N_{g,n})$ given in \cite{PS}.
The second author~\cite{O} gave an infinite presentation of $\M(N_{g,n})$ for $g\geq1$ and $n\leq1$, using the presentation of $\M(N_{g,n})$ given in \cite{St3}, and then, following this work, the authors~\cite{KO} gave an infinite presentation of $\M(N_{g,n})$ for $g\geq1$ and $n\geq2$.
It is known that $\T(N_{g,n})$ is the index $2$ subgroup of $\M(N_{g,n})$ (see \cite{Li3}).
Stukow~\cite{St4} gave a finite presentation of $\T(N_{g,n})$ for $g+n>3$ with $n\leq1$, applying the Reidemeister-Schreier method for the presentation of $\M(N_{g,n})$ given in \cite{St3} (see Theorems~\ref{FPT1} and \ref{FPT0}).

In this paper, we give an infinite presentation of $\T(N_{g,n})$ for $g\geq1$ and $n\geq0$ (see Theorem~\ref{main-thm}), mainly using the presentation of  $\T(N_{g,n})$ given in \cite{St4} and Birman exact sequences on mapping class groups of non-orientable surfaces.

Through this paper, the product $gf$ of mapping classes $f$ and $g$ means that we apply $f$ first and then $g$.
Moreover we do not distinguish a loop from its isotopy class.

\subsection{Main result}\label{MR}\

For a simple closed curve $c$ of $N_{g,n}$, a regular neighborhood of $c$ is either an annulus  or a M\"obius band.
We call $c$ a \textit{two sided} or a \textit{one sided} simple closed curve respectively.
For a two sided simple closed curve $c$, we can take two orientations $+_c$ and $-_c$ of a regular neighborhood of $c$.
The \textit{right handed Dehn twist} $t_{c;\theta}$ about a two sided simple closed curve $c$ with respect to $\theta\in\{+_c,-_c\}$ is the isotopy class of the map described as shown in Figure~\ref{dehn}.
$t_{c;\theta}$ does not depend on a choice of a representative curve of the isotopy class of $c$ and its regular neighborhood.
We remark that although the Dehn twist was defined for an \textit{oriented} simple closed curve in \cite{O, KO}, in this paper we do not consider an orientation of a simple closed curve in the definition of the Dehn twist.
We write $t_{c;\theta}=t_c$ if the orientation $\theta$ is given explicitly. That is, the direction of the twist is indicated by an arrow written beside $c$ as shown in Figure~\ref{dehn}.

\begin{figure}[htbp]
\includegraphics{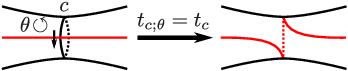}
\caption{The Dehn twist $t_{c;\theta}=t_{c}$ about $c$ with respect to $\theta\in\{+_\alpha,-_\alpha\}$.}\label{dehn}
\end{figure}

We denote by $f_\ast(\theta)$ the orientation of a regular neighborhood of $f(c)$ induced from $\theta\in\{+_c,-_c\}$, for a two sided simple closed curve $c$ of $N_{g,n}$ and $f\in\M(N_{g,n})$.
Let $c_1,\dots,c_k$, $c_0$, $c_0^\prime$ and $d_1,\dots,d_7$ be simple closed curves with arrows as shown in Figure~\ref{chain-lantern}.
$\M(N_{g,n})$ admits following relations.
\begin{enumerate}
\item\label{trivial-1}	$t_{c;\theta}=1$ if $c$ bounds a disk or a M\"obius band.
\item\label{trivial-2}	$t_{c;-_c}^{-1}=t_{c;+_c}$.
\item\label{conj}	$ft_{c;\theta}f^{-1}=t_{f(c);f_\ast(\theta)}$ for $f\in\M(N_{g,n})$.
\item\label{k-chain}	$(t_{c_1}t_{c_2}\cdots{}t_{c_k})^{k+1}=t_{c_0}t_{c_0^\prime}$ if $k$ is odd,\\
				$(t_{c_1}t_{c_2}\cdots{}t_{c_k})^{2k+2}=t_{c_0}$ if $k$ is even.
\item\label{lantern}	$t_{d_1}t_{d_2}t_{d_3}=t_{d_4}t_{d_5}t_{d_6}t_{d_7}$.
\end{enumerate}
We can check the relations (\ref{trivial-1}) and (\ref{trivial-2}) easily.
The relations (\ref{conj}), (\ref{k-chain}) and (\ref{lantern}) are called a \textit{conjugation relation}, a \textit{$k$-chain relation}, a \textit{lantern relation} respectively.
These are famous relations on mapping class groups.
In the relation (\ref{conj}), if $f=t_{c^\prime;\theta^\prime}$, $|c\cap{}c^\prime|=0$ or $1$, and the orientations $\theta$ and $\theta^\prime$ are compatible, then the relation can be rewritten as a \textit{commutativity relation} $t_{c;\theta}t_{c^\prime;\theta^\prime}=t_{c^\prime;\theta^\prime}t_{c^;\theta}$ or a \textit{braid relation} $t_{c;\theta}t_{c^\prime;\theta^\prime}t_{c;\theta}=t_{c^\prime;\theta^\prime}t_{c^;\theta}t_{c^\prime;\theta^\prime}$ respectively.

\begin{figure}[htbp]
\includegraphics{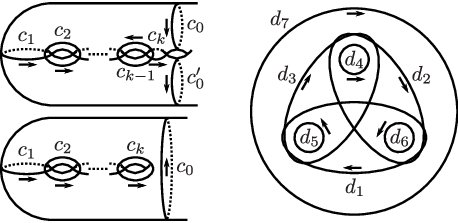}
\caption{}\label{chain-lantern}
\end{figure}

Our main result is as follows.

\begin{thm}\label{main-thm}
For $g\geq1$ and $n\geq0$, $\T(N_{g,n})$ admits a presentation with a generating set
$$X=\left\{t_{c;\theta}
\left|
\begin{array}{l}
c~\textrm{is a two sided simple closed curve of}~N_{g,n}~\textrm{and}\\
\theta~\textrm{is an orientation of a regular neighborhood of}~c.
\end{array}
\right.
\right\}.$$
The defining relations are
\begin{enumerate}
\item\label{bound}	$t_{c;\theta}=1$ if $c$ bounds a disk or a M\"obius band,
\item\label{inverse}	$t_{c;-_c}^{-1}=t_{c;+_c}$,
\item\label{t-conj}	all the conjugation relations $ft_{c;\theta}f^{-1}=t_{f(c);f_\ast(\theta)}$ for $f\in{X}$,
\item\label{2-chain}	all the $2$-chain relations,
\item\label{lanterns}	all the lantern relations,
\end{enumerate}
\end{thm}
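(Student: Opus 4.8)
The plan is to prove that the natural homomorphism onto $\T(N_{g,n})$ from the group $G$ presented as in Theorem~\ref{main-thm} is an isomorphism, by induction on $n$: the base case $n\le1$ rests on Stukow's finite presentation (Theorems~\ref{FPT1} and \ref{FPT0}), and the inductive step $n\ge2$ uses a Birman exact sequence, in the spirit of \cite{O, KO}. Write $G=\langle X\mid(\ref{bound})$--$(\ref{lanterns})\rangle$. Since relations (\ref{bound})--(\ref{lanterns}) are among the relations (\ref{trivial-1})--(\ref{lantern}) already known to hold in $\M(N_{g,n})$, and $\T(N_{g,n})$ is generated by Dehn twists, sending each generator $t_{c;\theta}\in X$ to the corresponding Dehn twist gives a surjective homomorphism $\Phi\colon G\to\T(N_{g,n})$. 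Everything reduces to showing $\Phi$ is injective, i.e.\ that the defining relations of a known presentation of $\T(N_{g,n})$ are consequences of (\ref{bound})--(\ref{lanterns}).

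For the base case, the finitely many surfaces with $g+n\le 3$ are handled directly ($\T(N_{g,n})$ being then trivial or easily presented), so assume $g+n>3$ and $n\le1$, where $\T(N_{g,n})=\langle X_0\mid R_0\rangle$ with $X_0\subseteq X$ a finite set of Dehn twists by \cite{St4}. To construct an inverse $\Psi\colon\T(N_{g,n})\to G$ of $\Phi$, I would first check, relation by relation, that every relation in $R_0$ is a consequence of (\ref{bound})--(\ref{lanterns}); several of Stukow's relations are themselves conjugation, $2$-chain, or lantern relations, and the rest reduce to these after short manipulations, so this is a finite but lengthy verification. This makes $\Psi$, defined on $X_0$ by $t_a\mapsto[t_a]$, a well-defined homomorphism with $\Phi\Psi=\mathrm{id}$. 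To see $\Psi\Phi=\mathrm{id}$ it suffices to treat each generator $[t_{c;\theta}]$ of $G$: using the $2$-chain and lantern relations one first rewrites $[t_{c;\theta}]$, when $c$ is separating, as a product of twists about two sided nonseparating curves, and then a change-of-coordinates argument furnishes $f=t_{a_1}\cdots t_{a_m}\in\T(N_{g,n})$ (with each $a_i$ among the curves in $X_0$) carrying $c$ to one of Stukow's standard curves $c_0$; since $t_{c;\theta}=f^{-1}t_{c_0;f_\ast\theta}f$ holds in $\T(N_{g,n})$, we get $\Psi\Phi([t_{c;\theta}])=[f^{-1}t_{c_0;f_\ast\theta}f]=[t_{c;\theta}]$ in $G$ by repeated use of (\ref{inverse}) and (\ref{t-conj}). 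Hence $\Phi$ is an isomorphism for $n\le1$.

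For the inductive step, assume the theorem for $N_{g,n-1}$ and let $\delta$ be a boundary component of $N_{g,n}$. Capping $\delta$ with a once-punctured disk and then forgetting the puncture gives two Birman exact sequences whose combination, restricted to twist subgroups, is a short exact sequence $1\to K\to\T(N_{g,n})\to\T(N_{g,n-1})\to1$, where $K$ is generated by the boundary twist $t_\delta$ together with the point-pushing maps along two sided loops (each such map being a product $t_{c_1}t_{c_2}^{-1}$ of Dehn twists about the two boundary curves of an annular neighborhood of the loop). Feeding the inductively known presentation of $\T(N_{g,n-1})$ in terms of all its Dehn twists, a presentation of $K$ (encoding the relevant subgroup of $\pi_1$ of the capped surface and a chain relation controlling $t_\delta$), and the conjugation action of $\T(N_{g,n})$ on $K$ into the standard presentation-assembly lemma for group extensions, one obtains a presentation of $\T(N_{g,n})$. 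I would then verify that every relation occurring in it --- the lifted relations of $\T(N_{g,n-1})$, the relations of $K$, and the action relations --- is a consequence of (\ref{bound})--(\ref{lanterns}), using mainly lantern and conjugation relations for the point-pushing part and chain and conjugation relations for the $t_\delta$ part. This yields injectivity of $\Phi$ for all $n\ge2$.

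The main obstacles are two. First, the change-of-coordinates step in the base case: unlike in the orientable setting, two sided simple closed curves in $N_{g,n}$ fall into many topological types (separating or not, and, when separating or when the complement is disconnected or non-orientable, distinguished further by the genera, numbers of boundary components and crosscaps of the complementary pieces), and one must know that $\T(N_{g,n})$ acts transitively on the curves of each type by elements that are products of the twists in $X_0$ --- or else eliminate the extra types via chain and lantern relations. Second, in the inductive step one must correctly isolate the part of the point-pushing subgroup lying in the twist subgroup: pushing along a one sided loop yields a crosscap slide rather than a Dehn twist, so the kernel $K$ must be described with care (only even, i.e.\ orientation-preserving, classes of loops contribute Dehn twists), and its defining relations must then be realized through (\ref{bound})--(\ref{lanterns}). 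I expect the verification of Stukow's relations in $G$ and of the $K$-relations to account for the bulk of the work.
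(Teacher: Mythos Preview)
Your overall strategy matches the paper's: verify Stukow's finite list of relations in $G$ for the base case $n\le1$, show every $t_{c;\theta}$ is in the image of $\Psi$, then induct on $n$ via the Birman and capping sequences (the paper splits your kernel $K$ into two stages, first presenting $\T^+(N_{g,n-1},\ast)$ in Proposition~\ref{main-thm-ast} and then lifting through $\C$, but this is only an organizational difference).

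The change-of-coordinates obstacle you flag is real, and the paper's resolution is \emph{not} to prove that $\T(N_{g,n})$ acts transitively on each curve type. It uses only $\M(N_{g,n})$-transitivity: if $x\in\M(N_{g,n})$ carrying the standard curve $d'$ to $d$ happens not to lie in $\T(N_{g,n})$, then $xy^{-1}\in\T(N_{g,n})$ by the index-$2$ sequence~(\ref{TMZ2}), and one simply computes $y t_{d'} y^{-1}$ explicitly for each standard $d'$ (e.g.\ $ya_1y^{-1}=a_1^{-1}$, $ya_2y^{-1}=e^{-1}$, $yb_{(g-2)/2}y^{-1}=\bar b_{(g-2)/2}$, $y\cdot y^2\cdot y^{-1}=y^2$), which is again a word on $X_0$. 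Also, the paper does not first reduce separating twists to nonseparating ones as you propose; instead it treats each complement type of $c$ directly: chain relations for $\Sigma_{h,1}\sqcup N_{g-2h,n+1}$, the generator $y^2$ for $N_{2,1}\sqcup N_{g-2,n+1}$, an inductive lantern for $N_{i,1}\sqcup N_{g-i,n+1}$ with $i\ge3$, and a crosscap-pushing argument (Remarks~\ref{CP1}--\ref{CP2}) for $N_{2,1}\sqcup\Sigma_{(g-2)/2,n+1}$. Your plan to funnel everything through a single nonseparating orbit would have to confront these case distinctions anyway, and the $y$-trick is what makes each case go through.
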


\begin{rem}\label{(2,0)}
If a two sided simple closed curve $c$ is a non-separating curve of $N_{2,0}$, then we can check that $t_{c;+_c}$ and $t_{c;-_c}$ are the same elements in $X$ of Theorem~\ref{main-thm}.
Hence by the relation (\ref{inverse}) of Theorem~\ref{main-thm}, we have the relation $t_{c;\theta}^2=1$.
\end{rem}

\begin{rem}\label{chain-rel}
Applying Theorem in \cite{Lu} to a regular neighborhood of $\cup_{i=1}^kc_i$ in Figure~\ref{chain-lantern}, it follows that any chain relation is obtained from the relations (\ref{bound}), (\ref{t-conj}), (\ref{2-chain}) and (\ref{lanterns}) of Theorem~\ref{main-thm}.
We assign the label $(4)^\prime$ to all the chain relations.
\end{rem}

Here is the outline of this paper.
In Section~\ref{BM}, we explain what we need to prove our main result.
In Section~\ref{main-thm-low-n}, we prove that the infinitely presented group with the presentation in Theorem~\ref{main-thm} is isomorphic to $\T(N_{g,n})$ with a presentation which is already known, for $n\leq1$.
In Section~\ref{main-thm-high-n}, we complete the proof of Theorem~\ref{main-thm}, by induction on $n$.

\section{Preliminaries}\label{BM}

In this section, we explain what we need to prove our main result Theorem~\ref{main-thm}.
In Section~\ref{CPFB}, we define the \textit{capping map}, the \textit{point pushing map}, the \textit{forgetful map} and the \textit{crosscap pushing map} on mapping class groups for non-orientable surfaces.
In Section~\ref{BT}, we introduce a finite presentation of $\T(N_{g,n})$ for $g\geq1$ and $n\leq1$.

\subsection{Homomorphisms on mapping class groups of non-orientable surfaces}\label{CPFB}\

We first define the \textit{capping map}, the \textit{point pushing map} and the \textit{forgetful map} on mapping class groups of non-orientable surfaces.
Take a point $\ast$ in the interior of $N_{g,n-1}$.
Let $\M(N_{g,n-1},\ast)$ denote the group consisting of isotopy classes of all diffeomorphisms of $N_{g,n-1}$ which fix $\ast$ and the boundary pointwise.
We can regard $N_{g,n}$ as a complement of an open disk neighborhood of $\ast$ in $N_{g,n-1}$.
The natural embedding $N_{g,n}\hookrightarrow{}N_{g,n-1}$ induces the homomorphism
$$\C:\M(N_{g,n})\to\M(N_{g,n-1},\ast)$$
which is called the \textit{capping map}.
The \textit{point pushing map}
$$\PP:\pi_1(N_{g,n-1},\ast)\to\M(N_{g,n-1},\ast)$$
is defined as follows.
For any loop $x\in\pi_1(N_{g,n-1},\ast)$, $\PP(x)$ is described by pushing $\ast$ once along $x$.
The \textit{forgetful homomorphism}
$$\F:\M(N_{g,n-1},\ast)\to\M(N_{g,n-1})$$
is defined naturally, that is, $\F(f)$ does not necessarily fix $\ast$.

Next, we consider exact sequences on mapping class groups of non-orientable surfaces.
Let $\M^+(N_{g,n-1},\ast)$ denote the subgroup of $\M(N_{g,n-1},\ast)$ consisting of elements which preserve a local orientation of $\ast$, and $\pi_1^+(N_{g,n-1},\ast)$ the subgroup of $\pi_1(N_{g,n-1},\ast)$ generated by two sided simple loops.
We have the exact sequences
\begin{eqnarray}
1\to\Z\to\M(N_{g,n})\overset{\C}{\to}\M^+(N_{g,n-1},\ast)\to1,\label{MCES}\\
\pi_1^+(N_{g,n-1},\ast)\overset{\PP}{\to}\M^+(N_{g,n-1},\ast)\overset{\F}{\to}\M(N_{g,n-1})\to1.\label{MBES}
\end{eqnarray}
The second sequence is called the \textit{Birman exact sequence}, introduced by Birman~\cite{B}.
Note that $\PP$ is injective except for the case $(g,n)=(2,1)$.
For details see \cite{B, Ko, St2, FM, PS}.

\begin{rem}\label{PP1}
For a simple loop $\alpha\in\pi_1^+(N_{g,n-1},\ast)$, we take an orientation $\theta$ of a regular neighborhood of $\alpha$.
Let $\gamma_1$ and $\gamma_2$ be the right side boundary curve and the left side boundary curve of the regular neighborhood of $\alpha$ for the direction of $\alpha$, respectively, where the left and right sides are determined by $\theta$.
Then we have $\PP(\alpha)=t_{\gamma_1;\theta_1}t_{\gamma_2;\theta_2}^{-1}$, where $\theta_1$ and $\theta_2$ are the orientations compatible with $\theta$ (see Figure~\ref{PP}).

\begin{figure}[htbp]
\includegraphics{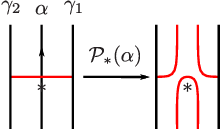}
\caption{The boundary curves $\gamma_1$ and $\gamma_2$ of a regular neighborhood of $\alpha$.}\label{PP}
\end{figure}

\end{rem}

\begin{rem}\label{PP2}
Let $\alpha$, $\beta$ and $\gamma\in\pi_1^+(N_{g,n-1},\ast)$ be simple loops with $\alpha\beta=\gamma$.
If $\alpha$ and $\beta$ intersect transversally at only $\ast$ as shown in Figure~\ref{alpha-beta}, then the relation $\PP(\gamma)=\PP(\beta)\PP(\alpha)$ is obtained from the relation (\ref{t-conj}) of Theorem~\ref{main-thm} (see Lemma~5.4 in \cite{KO}).
If $\alpha$ and $\beta$ intersect tangentially at only $\ast$ as shown in Figure~\ref{alpha-beta}, then the relation $\PP(\gamma)=\PP(\beta)\PP(\alpha)$ is obtained from the relation (\ref{lanterns}) of Theorem~\ref{main-thm} and a relation $t_{c;\theta}=1$, where $c$ is a simple closed curve bounding a disk neighborhood of $\ast$ (see Lemma~5.2 in \cite{KO}).
We call this relation the \textit{extended lantern relation}.

\begin{figure}[htbp]
\includegraphics{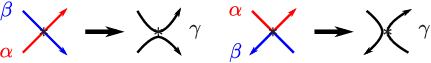}
\caption{}\label{alpha-beta}
\end{figure}

\end{rem}

Finally, we define the \textit{crosscap pushing map}.
Let $S$ be a surface obtained by shrinking some crosscap of $N_{g,n}$ into a point.
We call this operation the \textit{blowdown} with respect to the crosscap.
Note that $S$ is diffeomorphic to either $N_{g-1,n}$ or $\Sigma_{\frac{g-1}{2},n}$ (see Figure~\ref{blowup-down}).
Let $\ast$ be the shrinked point of $S$.
Conversely, we can obtain $N_{g,n}$ from $S$, and call this operation the \textit{blowup} with respect to $\ast$.
The \textit{crosscap pushing map}
$$\CP:\pi_1(S,\ast)\to\M(N_{g,n})$$
is defined as follows.
For $x\in\pi_1(S,\ast)$, let $\tilde{x}$ be an oriented loop of $N_{g,n}$ induced from $x$ by the blowup with respect to $\ast$.
$\CP(x)$ is described by pushing the crosscap, which is obtained by the blowup with respect to $\ast$, once along $\tilde{x}$ (see Figure~\ref{CP}).

\begin{figure}[htbp]
\includegraphics{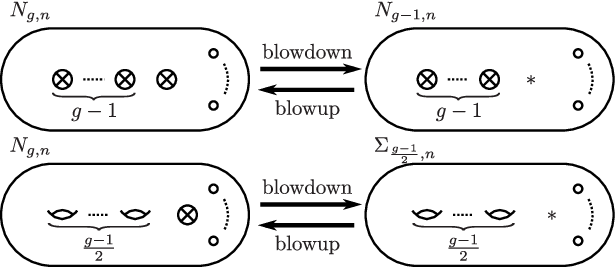}
\caption{}\label{blowup-down}
\end{figure}

\begin{figure}[htbp]
\includegraphics{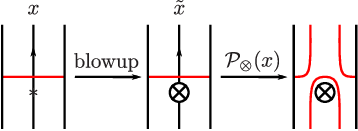}
\caption{The crosscap pussing map $\CP$.}\label{CP}
\end{figure}

Similar to Remarks~\ref{PP1} and \ref{PP2}, we have the followings.

\begin{rem}\label{CP1}
For a two sided simple loop $\alpha\in\pi_1(S,\ast)$, we take an orientation $\theta$ of a regular neighborhood of $\alpha$.
Let $\gamma_1$ and $\gamma_2$ be the right side boundary curve and the left side boundary curve of the regular neighborhood of $\alpha$ for the direction of $\alpha$, respectively, where the left and right sides are determined by $\theta$.
We also denote by $\tilde{\gamma}_1$ and $\tilde{\gamma}_2$ loops of $N_{g,n}$ induced from $\gamma_1$ and $\gamma_2$ by the blowup with respect to $\ast$, respectively.
Then we have $\CP(\alpha)=t_{\tilde{\gamma}_1;\tilde{\theta}_1}t_{\tilde{\gamma}_2;\tilde{\theta}_2}^{-1}$, where $\tilde{\theta}_1$ and $\tilde{\theta}_2$ are the orientations of regular neighborhoods of $\tilde{\gamma}_1$ and $\tilde{\gamma}_2$ induced by the blowup with respect to $\ast$ from the orientations of regular neighborhoods of $\gamma_1$ and $\gamma_2$ which are compatible with $\theta$, respectively.
\end{rem}

\begin{rem}\label{CP2}
Let $\alpha$, $\beta$ and $\gamma\in\pi_1(S,\ast)$ be two sided simple loops with $\alpha\beta=\gamma$.
If $\alpha$ and $\beta$ intersect transversally at only $\ast$ as shown in Figure~\ref{alpha-beta}, then the relation $\CP(\gamma)=\CP(\beta)\CP(\alpha)$ is obtained from the relarions (\ref{t-conj}) of Theorem~\ref{main-thm} (see Lemma~5.4 in \cite{KO}).
If $\alpha$ and $\beta$ intersect tangentially at only $\ast$ as shown in Figure~\ref{alpha-beta}, then the relation $\CP(\gamma)=\CP(\beta)\CP(\alpha)$ is obtained from the relations (\ref{bound}) and (\ref{lanterns}) of Theorem~\ref{main-thm} (see Lemma~5.2 in \cite{KO}).
\end{rem}

\subsection{A finite presentation for $\T(N_{g,n})$}\label{BT}\

For $g\geq2$ and $n\geq0$ we have the short exact sequence
\begin{eqnarray}
1\to\T(N_{g,n})\to\M(N_{g,n})\to\Z/{2\Z}\to1\label{TMZ2}
\end{eqnarray}
(see \cite{Li1,Li3}).
Using the Reidemeister Schreier method for a presentation of $\M(N_{g,n})$, we can obtain a presentation of $\T(N_{g,n})$.

For $n\leq1$, let $\alpha_1,\dots,\alpha_{g-1}$, $\beta$, $\gamma$, $\varepsilon$ and $\zeta$ be simple closed curves of $N_{g,n}$ with arrows as shown in Figure~\ref{generator}, and if $g$ is even, $\beta_0,\dots,\beta_{\frac{g-2}{2}}$, $\bar{\beta}_{\frac{g-6}{2}}$, $\bar{\beta}_{\frac{g-4}{2}}$ and $\bar{\beta}_{\frac{g-2}{2}}$ simple closed curves of $N_{g,n}$ with arrows as shown in Figure~\ref{generator}.
Let $\alpha$ be an oriented simple loop of $N_{g-1,n}$ based at $\ast$ induced from $\alpha_1$ by the blowdown with respect to the first crosscap, as shown in Figure~\ref{generator}.
For simplicity, we denote $t_{\alpha_i}=a_i$, $t_\beta=b$, $t_\gamma=c$, $t_\varepsilon=e$, $t_\zeta=f$, $t_{\beta_i}=b_i$, $t_{\bar{\beta}_i}=\bar{b}_i$ and $\CP(\alpha)=y$.
Note that $y^2=t_\delta$, where $\delta$ is a simple close curve of $N_{g,n}$ with an arrow as shown in Figure~\ref{generator}.
For $g\geq1$ and $n\leq1$, $\T(N_{g,n})$ admits a finite presentation given in Theorems~\ref{FPT1}, \ref{FPT0} or Lemma~\ref{FPT}.

\begin{figure}[htbp]
\includegraphics{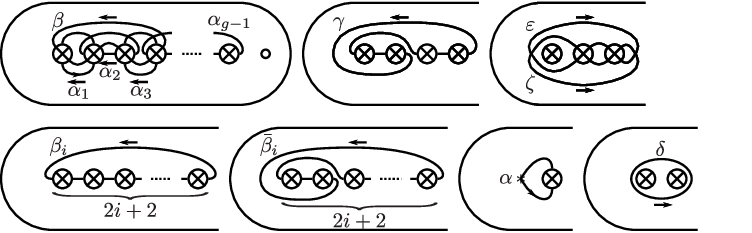}
\caption{}\label{generator}
\end{figure}

\begin{thm}[Theorem~3.1 in \cite{St4}]\label{FPT1}
If $g\geq3$ is odd or $g=4$, then $\T(N_{g,1})$ admits a presentation with generators $a_1,\dots,a_{g-1}$, $e$, $f$, $y^2$ and $b$, $c$ for $g\geq4$.
The defining relations are
\begin{itemize}
\item[(A1)]						$a_ia_j=a_ja_i$ for $g\geq4$, $|i-j|>1$,
\item[(A2)]						$a_ia_{i+1}a_i=a_{i+1}a_ia_{i+1}$ for $i=1,\dots,g-2$,
\item[(A3)]						$a_ib=ba_i$ for $g\geq4$, $i\neq4$,
\item[(A4)]						$ba_4b=a_4ba_4$ for $g\geq5$,
\item[(A5)]						$(a_2a_3a_4b)^{10}=(a_1a_2a_3a_4b)^6$ for $g\geq5$,
\item[(A6)]						$(a_2a_3a_4a_5a_6b)^{12}=(a_1a_2a_3a_4a_5a_6b)^9$ for $g\geq7$,
\item[($\overline{\mathrm{A1}}_1$)]	$ea_j=a_je$ for $g\geq5$, $j\geq4$,
\item[($\overline{\mathrm{A1}}_2$)]	$fa_j=a_jf$ for $g\geq5$, $j\geq4$,
\item[($\overline{\mathrm{A2}}_1$)]	$a_1ea_1=ea_1e$,
\item[($\overline{\mathrm{A2}}_2$)]	$a_3^{-1}ea_3^{-1}=ea_3^{-1}e$ for $g\geq4$,
\item[($\overline{\mathrm{A2}}_3$)]	$a_1fa_1=fa_1f$,
\item[($\overline{\mathrm{A3}}_1$)]	$a_1c=ca_1$ for $g=4$, $5$,
\item[($\overline{\mathrm{A3}}_2$)]	$ec=ce$ for $g=4$, $5$,
\item[($\overline{\mathrm{A4}}$)]	$ca_4c=a_4ca_4$ for $g=5$, $6$,
\item[($\overline{\mathrm{A5}}$)]	$(e^{-1}a_3a_4c)^{10}=(a_1^{-1}e^{-1}a_3a_4c)^6$ for $g=5$, $6$,
\item[($\overline{\mathrm{A6}}$)]	$(e^{-1}a_3a_4a_5a_6c)^{12}=(a_1^{-1}e^{-1}a_3a_4a_5a_6c)^9$ for $g=7$, $8$,
\item[($\overline{\mathrm{B1}}$)]	$(a_2a_3a_1a_2ea_1a_3^{-1}e)(a_2a_3a_1a_2fa_1a_3^{-1}f)=1$ for $g\geq4$,
\item[($\overline{\mathrm{B2}}_1$)]	$y^2=a_2a_1ea_1a_2a_1a_2a_1a_2fa_1a_2$,
\item[($\overline{\mathrm{B2}}_2$)]	$(a_2a_1ea_1a_2a_1a_2a_1a_2fa_1a_2)(a_2a_1fa_1a_2a_1a_2a_1a_2ea_1a_2)=1$,
\item[($\overline{\mathrm{B3}}$)]	$y^2a_3=a_3y^2$ for $g\geq4$,
\item[($\overline{\mathrm{B4}}_1$)]	$ea_2=a_2e$,
\item[($\overline{\mathrm{B4}}_2$)]	$fa_2=a_2f$,
\item[($\overline{\mathrm{B6}}_1$)]	$bc=(a_1a_2a_3f^{-1}a_3^{-1}a_2^{-1}a_1^{-1})(a_2^{-1}a_3^{-1}e^{-1}a_3a_2)$ for $g\geq4$,
\item[($\overline{\mathrm{B6}}_2$)]	$c(y^2by^{-2})=(a_1^{-1}e^{-1}a_3a_2a_3^{-1}ea_1)(ea_3^{-1}y^2a_2y^{-2}a_3e^{-1})$ for $g=4$, $5$,
\item[($\overline{\mathrm{B7}}_1$)]	$(a_4a_5a_3a_4a_2a_3a_1a_2ea_1a_3^{-1}ea_4^{-1}a_3^{-1}a_5^{-1}a_4^{-1})c\\=b(a_4a_5a_3a_4a_2a_3a_1a_2ea_1a_3^{-1}ea_4^{-1}a_3^{-1}a_5^{-1}a_4^{-1})$ for $g\geq6$,
\item[($\overline{\mathrm{B7}}_2$)]	$(a_2^{-1}a_1^{-1}a_3^{-1}a_2^{-1}a_4^{-1}a_3^{-1}a_5^{-1}a_4^{-1})b(a_4a_5a_3a_4a_2a_3a_1a_2)y^2\\=y^2(a_2^{-1}a_1^{-1}a_3^{-1}a_2^{-1}a_4^{-1}a_3^{-1}a_5^{-1}a_4^{-1})b(a_4a_5a_3a_4a_2a_3a_1a_2)$ for $g\geq6$,
\item[($\overline{\mathrm{B8}}_1$)]	$(a_1ea_3^{-1}a_4^{-1}ca_4a_3e^{-1}a_1^{-1})(a_1^{-1}a_2^{-1}a_3^{-1}a_4^{-1}b^{-1}a_4a_3a_2a_1)\\=a_4^{-1}(a_3^{-1}a_2^{-1}e^{-1}a_3a_4a_3^{-1}ea_2a_3)a_2^{-1}e^{-1}$ for $g\geq5$,
\item[($\overline{\mathrm{B8}}_2$)]	$(a_1^{-1}a_2^{-1}a_3^{-1}a_4^{-1}ba_4a_3a_2a_1)(a_1fa_3^{-1}a_4^{-1}y^{-2}c^{-1}y^2a_4a_3f^{-1}a_1^{-1})\\=a_4^{-1}(a_3^{-1}fa_2a_3a_4a_3^{-1}a_2^{-1}f^{-1}a_3)fa_2$ for $g=5$, $6$.
\end{itemize}
If $g\geq6$ is even, then $\T(N_{g,1})$ admits a presentation with generators $a_1,\dots,a_{g-1}$, $e$, $f$, $y^2$, $b$, $c$ and additionally $b_0,\dots,b_{\frac{g-2}{2}}$, $\bar{b}_{\frac{g-6}{2}}$, $\bar{b}_{\frac{g-4}{2}}$, $\bar{b}_{\frac{g-2}{2}}$.
The defining relations are relations $(\mathrm{A1})$-$(\mathrm{A6})$, $(\overline{\mathrm{A1}}_1)$-$(\overline{\mathrm{A6}})$, $(\overline{\mathrm{B1}})$-$(\overline{\mathrm{B8}}_2)$ and additionally
\begin{itemize}
\item[(A7)]						$b_0=a_1$, $b_1=b$,
\item[(A8)]						$b_{i+1}=(b_{i-1}a_{2i}a_{2i+1}a_{2i+2}a_{2i+3}b_i)^5(b_{i-1}a_{2i}a_{2i+1}a_{2i+2}a_{2i+3})^{-6}$ for $1\leq{i}\leq\frac{g-4}{2}$,
\item[(A9a)]					$b_2b=bb_2$ for $g=6$,
\item[(A9b)]					$b_{\frac{g-2}{2}}a_{g-5}=a_{g-5}b_{\frac{g-2}{2}}$ for $g\geq8$,
\item[($\overline{\mathrm{A7a}}$)]	$\bar{b}_0=a_1^{-1}$, $\bar{b}_1=c$ for $g=6$,
\item[($\overline{\mathrm{A7b}}$)]	$\bar{b}_1=c$ for $g=8$,
\item[($\overline{\mathrm{A7c}}$)]	$\bar{b}_i=z_{g-1}b_iz_{g-1}^{-1}$ where $i=\frac{g-6}{2}$, $\frac{g-4}{2}$, $i\geq2$ and $z_{g-1}=(a_{g-1}a_ga_{g-2}a_{g-1}\cdots{}a_3a_4e^{-1}a_3a_1^{-1}e^{-1})(a_2^{-1}a_1^{-1}\cdots{}a_{g-1}^{-1}a_{g-2}^{-1}a_g^{-1}a_{g-1}^{-1})$,
\item[($\overline{\mathrm{A8a}}$)]	$\bar{b}_2=(\bar{b}_0e^{-1}a_3a_4a_5\bar{b}_1)^5(\bar{b}_0e^{-1}a_3a_4a_5)^{-6}$ for $g=6$,
\item[($\overline{\mathrm{A8b}}$)]	$\bar{b}_{\frac{g-2}{2}}=(\bar{b}_{\frac{g-6}{2}}a_{g-4}a_{g-3}a_{g-2}a_{g-1}\bar{b}_{\frac{g-4}{2}})^5(\bar{b}_{\frac{g-6}{2}}a_{g-4}a_{g-3}a_{g-2}a_{g-1})^{-6}$ for $g\geq8$,
\item[($\overline{\mathrm{A9a}}$)]	$\bar{b}_2c=c\bar{b}_2$ for $g=6$,
\item[($\overline{\mathrm{A9b}}$)]	$\bar{b}_{\frac{g-2}{2}}a_{g-5}=a_{g-5}\bar{b}_{\frac{g-2}{2}}$ for $g\geq8$.
\end{itemize}
\end{thm}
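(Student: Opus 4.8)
The plan is to derive this presentation from a finite presentation of the full mapping class group $\M(N_{g,1})$ via the Reidemeister--Schreier method applied to the index-two inclusion in the short exact sequence~(\ref{TMZ2}). Concretely, I would start from the finite presentation of $\M(N_{g,1})$ due to Stukow~\cite{St3} (a Tietze simplification of the Paris--Szepietowski presentation~\cite{PS}), whose generators are the Dehn twists $a_1,\dots,a_{g-1}$, $b$, $c$, $e$, $f$ (together with $b_i$ and $\bar b_i$ for $g$ even) and one crosscap slide $y$; the homomorphism $\M(N_{g,1})\to\Z/2\Z$ in~(\ref{TMZ2}) is the one killing every Dehn twist and sending $y$ to the generator, so $\{1,y\}$ is a Schreier transversal for $\T(N_{g,1})$ and $y^{2}=t_\delta$ lies in $\T(N_{g,1})$.

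With this transversal, Reidemeister--Schreier produces, for each generator $s$ of $\M(N_{g,1})$, the two elements $s$ and $ysy^{-1}$ of $\T(N_{g,1})$ (and, for $s=y$, the elements $1$ and $y^{2}$). Since $y$ is supported near the first crosscap, $ysy^{-1}=s$ for the twists $s$ away from that crosscap, while the handful of genuinely new conjugates are --- modulo the relations below --- Dehn twists about curves drawn in Figure~\ref{generator} and correspond to $e$, $f$, $c$; this yields the stated generating set, including $y^2$ and, when $g$ is even, the extra twists $b_i,\bar b_i$. Rewriting each defining relator of $\M(N_{g,1})$ and each of its $y$-conjugates in these generators gives the unbarred relations (A1)--(A9) from the unconjugated relators (braid/commutation relations among the $a_i$, chain relations, lantern-type relations, the $b_i$-recursions) and the barred relations $(\overline{\mathrm{A1}})$--$(\overline{\mathrm{B8}})$ from the $y$-conjugates, once the values of $ya_iy^{-1}$ and $y^{\pm2}sy^{\mp2}$ have been substituted; a final round of Tietze transformations eliminates the redundant conjugate generators and normalizes each relator to the displayed form.

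The main obstacle is twofold. First, the geometric bookkeeping: one must compute exactly how the crosscap slide $y$ and its square $y^2=t_\delta$ transport each curve in Figure~\ref{generator}, since an error there corrupts every barred relation at once. Second, the sheer bulk of the Tietze simplification needed to pass from the raw Reidemeister--Schreier output --- with about twice the number of generators and relators of $\M(N_{g,1})$ --- down to the compact list above. A secondary complication is the genus-dependent case split (odd $g$ versus $g=4$ versus even $g\ge6$): it arises because $y$ interacts differently with the curves $\beta_i$, which is what forces the additional generators $b_i,\bar b_i$ and the recursions (A8) and $(\overline{\mathrm{A8}})$ precisely when $g$ is even.
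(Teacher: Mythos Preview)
Your outline is correct and matches the approach of the original source: the paper does not prove this theorem but quotes it from Stukow~\cite{St4}, who obtained it exactly as you describe, by applying the Reidemeister--Schreier method (with transversal $\{1,y\}$) to his presentation of $\M(N_{g,1})$ from~\cite{St3}. The paper even states this explicitly in Section~\ref{BG}, so there is nothing to compare.
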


\begin{thm}[Theorem~3.2 in \cite{St4}]\label{FPT0}
If $g\geq5$ is odd, then the group $\T(N_{g,0})$ is isomorphic to the quotient of the group $\T(N_{g,1})$ with the presentation given in Theorem~\ref{FPT1} obtained by adding a generator $\varrho$ and relations
\begin{itemize}
\item[($\mathrm{C1a}$)]			$(a_1a_2\cdots{}a_{g-1})^g=\varrho$,
\item[($\overline{\mathrm{C1a}}$)]	$(a_1^{-1}e^{-1}a_3\cdots{}a_{g-1})^g=y^2\varrho$,
\item[($\mathrm{C2}$)]			$a_i\varrho=\varrho{}a_i$ for $1\leq{i}\leq{}g-1$,
\item[($\overline{\mathrm{C2}}$)]	$\varrho{}e=f\varrho$,
\item[($\overline{\mathrm{C5}}_1$)]	$\varrho{}y^2=y^{-2}\varrho$,
\item[($\mathrm{C3}$)]			$\varrho^2=1$,
\item[($\overline{\mathrm{C4a}}$)]	$(a_2a_3\cdots{}a_{g-1}e^{-1}a_3\cdots{}a_{g-1})^{\frac{g-1}{2}}=1$.
\end{itemize}
Moreover, relations $(\overline{\mathrm{A1}}_2)$, $(\overline{\mathrm{B2}}_2)$, $(\overline{\mathrm{B4}}_2)$ are superfluous.

If $g\geq4$ is even, then the group $\T(N_{g,0})$ is isomorphic to the quotient of the group $\T(N_{g,1})$ with the presentation given in Theorem~\ref{FPT1} obtained by adding a generator $\bar{\varrho}$ and relations
\begin{itemize}
\item[($\mathrm{C1b}$)]			$(a_1a_2\cdots{}a_{g-1})^g=1$,
\item[($\overline{\mathrm{C2}}_1$)]	$\bar{\varrho}a_1=a_1^{-1}\bar{\varrho}$,
\item[($\overline{\mathrm{C2}}_2$)]	$\bar{\varrho}a_i=a_i\bar{\varrho}$ for $3\leq{i}\leq{}g-1$,
\item[($\overline{\mathrm{C2}}_3$)]	$\bar{\varrho}a_2=e^{-1}\bar{\varrho}$,
\item[($\overline{\mathrm{C5}}_2$)]	$\bar{\varrho}y^2=y^{-2}\bar{\varrho}$,
\item[($\overline{\mathrm{C3}}$)]	$\bar{\varrho}^2=1$,
\item[($\overline{\mathrm{C4}}$)]	$(\bar{\varrho}a_2a_3\cdots{}a_{g-1})^{g-1}=1$.
\end{itemize}
Moreover, relations $(\overline{\mathrm{A1}}_1)$, $(\overline{\mathrm{A2}}_1)$, $(\overline{\mathrm{A2}}_2)$ are superfluous.
\end{thm}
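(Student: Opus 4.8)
The plan is to apply the Reidemeister--Schreier method to the short exact sequence~(\ref{TMZ2}) with $n=0$, starting from a presentation of $\M(N_{g,0})$ that exhibits it as a one-generator extension of $\M(N_{g,1})$. Such a presentation is furnished by~\cite{St3} (refining~\cite{PS}): capping the boundary of $N_{g,1}$ with a once-punctured disk and then forgetting the puncture realizes $\M(N_{g,0})$ through the exact sequences~(\ref{MCES}) and~(\ref{MBES}) with $n=1$. It follows that $\M(N_{g,0})$ has a presentation whose generators are those of the presentation of $\M(N_{g,1})$ from~\cite{St3} together with one new generator coming from the closing, and whose relations are those of $\M(N_{g,1})$ together with finitely many \emph{closing relations}: one expressing the boundary Dehn twist as a chain word in $a_1,\dots,a_{g-1}$ so that $\C$ annihilates it, several expressing the $\PP$-images of a generating set of $\pi_1^+(N_{g,0},\ast)$, and a few relating the new generator to the Dehn-twist generators.

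Next I would run Reidemeister--Schreier on~(\ref{TMZ2}) with $n=0$ using the two-element transversal $\{1,Y\}$, where $Y$ is a crosscap slide --- the only generator of the above presentation lying outside $\T(N_{g,0})$, and the same transversal that Stukow used to derive Theorem~\ref{FPT1} from the presentation of $\M(N_{g,1})$. The rewriting yields generators $t$ and $YtY^{-1}$ for each Dehn-twist generator $t$ of $\M(N_{g,0})$, together with $Y^2$, and relations obtained by rewriting $qrq^{-1}$ for every defining relation $r$ and every $q\in\{1,Y\}$. Since the presentation of $\M(N_{g,0})$ extends that of $\M(N_{g,1})$ by exactly one generator and the closing relations, the resulting presentation of $\T(N_{g,0})$ extends the Reidemeister--Schreier presentation of $\T(N_{g,1})$ --- that is, the presentation of Theorem~\ref{FPT1} --- by the rewrites of the closing relations and of the new generator. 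Finally one applies Tietze transformations to bring the whole system to the displayed form, in which the new generator is recorded as $\varrho$ for $g$ odd and $\bar\varrho$ for $g$ even while the rewrites of the closing relations become $(\mathrm{C1a})$--$(\overline{\mathrm{C4a}})$ respectively $(\mathrm{C1b})$--$(\overline{\mathrm{C4}})$, and to check that the relations listed as superfluous have indeed become redundant.

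The main obstacle is the length and bookkeeping of this computation rather than any conceptual difficulty. One must first write the closing relations of $\M(N_{g,0})$ explicitly --- which itself requires care, for instance identifying the correct chain word for the boundary twist and the correct $\PP$-images, with separate treatment of the parity of $g$ and of the exceptional small genera --- then double and rewrite every relation under the transversal $\{1,Y\}$, which produces very long words, and finally reduce the entire system by Tietze moves while keeping track of the numerous case distinctions. The genuinely delicate points, as opposed to the merely tedious ones, are isolating the new generator $\varrho$ (resp.\ $\bar\varrho$) together with its defining and commutation relations, and certifying the claimed redundancies of $(\overline{\mathrm{A1}}_2)$, $(\overline{\mathrm{B2}}_2)$, $(\overline{\mathrm{B4}}_2)$ when $g$ is odd and of $(\overline{\mathrm{A1}}_1)$, $(\overline{\mathrm{A2}}_1)$, $(\overline{\mathrm{A2}}_2)$ when $g$ is even.
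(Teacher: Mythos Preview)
The paper does not supply its own proof of this theorem: it is quoted verbatim from Stukow~\cite{St4} and used as an input, so there is no in-paper argument to compare against. That said, your sketch is in line with what the paper says about the provenance of the result---namely that Stukow obtained it by applying the Reidemeister--Schreier method to the presentation of $\M(N_{g,n})$ from~\cite{St3}---and with the analogous computation the paper does carry out in Lemma~\ref{FPT} for the low-genus cases.
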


\begin{lem}\label{FPT}
\begin{enumerate}
\item	$\T(N_{1,0})$ and $\T(N_{1,1})$ are trivial.
\item	$\T(N_{2,0})=\langle{a_1}\mid{a_1^2}\rangle$.
\item	$\T(N_{2,1})=\langle{a_1,y^2}\mid{a_1y^2a_1^{-1}y^{-2}}\rangle$.
\item	$\T(N_{3,0})=\langle{a_1,a_2}\mid{a_1a_2a_1a_2^{-1}a_1^{-1}a_2^{-1},(a_1a_2)^6}\rangle$.
\end{enumerate}
\end{lem}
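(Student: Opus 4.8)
The plan is to handle the four low-genus cases one at a time, in each case identifying $\T(N_{g,n})$ with a known group and checking that the listed presentation is correct. For (1), $N_{1,0}$ is the real projective plane and $N_{1,1}$ the Möbius band; both have trivial mapping class group (see \cite{E}), and since $\T(N_{g,n})\leq\M(N_{g,n})$ the twist subgroup is trivial as well. Actually one can argue more directly: on $N_{1,0}$ and $N_{1,1}$ every two sided simple closed curve bounds a disk or a Möbius band, so by relation (\ref{bound}) every generator is trivial. For (2), I would recall that $\M(N_{2,0})=\Z/2\Z\oplus\Z/2\Z$ (see \cite{Li1}), with the two $\Z/2\Z$ factors generated by a Dehn twist $a_1$ about the unique (up to isotopy) nonseparating two sided curve and by a crosscap slide (or the hyperelliptic-type involution). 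Since $\T(N_{2,0})$ is the index $2$ subgroup generated by Dehn twists and the only nontrivial Dehn twist available is $a_1$ (with $a_1^2=1$ by Remark~\ref{(2,0)}), we get $\T(N_{2,0})=\langle a_1\mid a_1^2\rangle\cong\Z/2\Z$; the index-$2$ statement (\ref{TMZ2}) confirms the order.

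For (3), the Birman exact sequence (\ref{MCES}) with $(g,n)=(2,1)$ relates $\M(N_{2,1})$ to $\M^+(N_{2,0},\ast)$, and one knows (see \cite{St2,PS}) that $\M(N_{2,1})$ is generated by the Dehn twist $a_1$ together with $y$, where $y=\CP(\alpha)$ is a crosscap pushing map and $y^2=t_\delta$ is a Dehn twist about the boundary-parallel curve $\delta$. The relevant structural fact is that $\M(N_{2,1})\cong\Z\ltimes\Z$ or, more to the point for us, that the subgroup generated by $a_1$ and $y^2$ is free abelian of rank $2$: $a_1$ and $y^2=t_\delta$ are disjoint Dehn twists (their supporting curves are disjoint) and neither is a proper power nor trivial, so they generate $\Z^2=\langle a_1,y^2\mid a_1y^2a_1^{-1}y^{-2}\rangle$. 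To see these generate all of $\T(N_{2,1})$: $\M(N_{2,1})/\T(N_{2,1})\cong\Z/2\Z$, and one checks via the crosscap-slide/transposition generators of \cite{PS,St2} that a Schreier transversal produces exactly $a_1$ and $y^2$ as twist generators. I would cite Stukow's Reidemeister–Schreier computation \cite{St4} for the precise bookkeeping here rather than redo it.

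For (4), $N_{3,0}$ is the connected sum of three projective planes; $\M(N_{3,0})$ was computed by Birman–Chillingworth \cite{St1} and has order $24$ (it is $\mathrm{GL}(2,\Z/2)\times\Z/2$ or similar), so $\T(N_{3,0})$ has order $12$. The curves $\alpha_1,\alpha_2$ of Figure~\ref{generator} give Dehn twists $a_1,a_2$ that form a $2$-chain, hence satisfy the braid relation $a_1a_2a_1=a_2a_1a_2$ (relation (A2)) and the $2$-chain relation $(a_1a_2)^6=t_{c_0}$; but $c_0$ bounds a Möbius band in $N_{3,0}$, so $t_{c_0}=1$ and we get $(a_1a_2)^6=1$. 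Thus there is a surjection $\langle a_1,a_2\mid a_1a_2a_1a_2^{-1}a_1^{-1}a_2^{-1},(a_1a_2)^6\rangle\twoheadrightarrow\T(N_{3,0})$; the abstract group on the right has order $12$ (it is a central extension of $S_3$, or one can compute directly), matching the order of $\T(N_{3,0})$, so the surjection is an isomorphism.

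The main obstacle is case (3): unlike $(2,0)$ and $(3,0)$, the group $\M(N_{2,1})$ is infinite and $\PP$ fails to be injective precisely for $(g,n)=(2,1)$, so I cannot simply read the presentation off a Birman exact sequence; I will need to invoke Stukow's explicit Reidemeister–Schreier output \cite{St4} (or an ad hoc analysis of $\M(N_{2,1})$) to be sure that no further relations between $a_1$ and $y^2$ are needed. The other three cases are essentially order/structure matching against classical computations and are routine.
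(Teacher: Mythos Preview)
Your approach to cases (1)--(3) is essentially sound and parallels the paper's method (the paper simply applies the Reidemeister--Schreier method to the known presentations of $\M(N_{g,n})$ from \cite{Li1}, \cite{BC}, \cite{St1} in each case, which is cleaner than the mix of ad hoc arguments you sketch, but your reasoning would work).

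There is, however, a genuine error in case (4). The group $\M(N_{3,0})$ is \emph{infinite}: it is isomorphic to $GL(2,\Z)$, not to a group of order $24$. Correspondingly $\T(N_{3,0})\cong SL(2,\Z)$ is infinite as well. You can see this directly from the abstract presentation you wrote down: in $B_3=\langle a_1,a_2\mid a_1a_2a_1=a_2a_1a_2\rangle$ the element $(a_1a_2)^3$ generates the center, so modding out by $(a_1a_2)^6$ gives a group fitting into $1\to\Z/2\Z\to B_3/\langle(a_1a_2)^6\rangle\to PSL(2,\Z)\to 1$, which is infinite (in fact it is $SL(2,\Z)$). Your order-counting argument therefore collapses: you have produced a surjection from the abstract group onto $\T(N_{3,0})$, but comparing cardinalities cannot establish injectivity.

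The paper avoids this by working the other way: starting from Stukow's explicit presentation $\M(N_{3,0})=\langle a_1,a_2,y\mid a_1a_2a_1a_2^{-1}a_1^{-1}a_2^{-1},\ (a_1a_2)^6,\ y^2,\ (a_1y)^2,\ (a_2y)^2\rangle$ and running Reidemeister--Schreier with transversal $\{1,y\}$. The extra generator $y^2$ and the extra relators involving $y$ all die (since $y^2=1$ and $ya_iy^{-1}=a_i^{-1}$), leaving exactly $\langle a_1,a_2\mid a_1a_2a_1a_2^{-1}a_1^{-1}a_2^{-1},\ (a_1a_2)^6\rangle$. You should replace your order argument with this computation.
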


\begin{proof}
Note that $\Z/{2\Z}$ is generated by the image of $y$, in the sequence (\ref{TMZ2}) at the beginning of Section~\ref{BT}.
\begin{enumerate}
\item	$\M(N_{1,0})$ and $\M(N_{1,1})$ are trivial (see Theorem~3.4 in \cite{E}).
	Thus $\T(N_{1,0})$ and $\T(N_{1,1})$ are also trivial.
\item	We have the presentation
	$$\M(N_{2,0})=\langle{a_1,y}\mid{a_1^2,y^2,(a_1y)^2}\rangle$$
	(see Lemma~5 in \cite{Li1}).
	Using the Reidemeister Schreier method for this presentation, we obtain the presentation
	\begin{eqnarray*}
	\T(N_{2,0})
	&=&\langle{a_1,y^2}\mid{a_1^2,y^2,a_1^{-1}y^2a_1}\rangle\\
	&=&\langle{a_1}\mid{a_1^2}\rangle.
	\end{eqnarray*}
\item	We have the presentation
	$$\M(N_{2,1})=\langle{a_1,y}\mid{ya_1y^{-1}a_1}\rangle$$
	(see Theorem~A.7 in \cite{St1}).
	Using the Reidemeister Schreier method for this presentation, we obtain the presentation
	$$\T(N_{2,1})=\langle{a_1,y^2}\mid{a_1y^2a_1^{-1}y^{-2}}\rangle.$$
\item	We have the presentation
	$$\M(N_{3,0})=\langle{a_1,a_2,y}\mid{a_1a_2a_1a_2^{-1}a_1^{-1}a_2^{-1},(a_1a_2)^6,y^2,(a_1y)^2,(a_2y)^2}\rangle$$
	(see Theorem~3 in \cite{BC}).
	Using the Reidemeister Schreier method for this presentation, we obtain the presentation
	\begin{eqnarray*}
	\T(N_{3,0})
	&=&\langle{a_1,a_2,y^2}\mid{a_1a_2a_1a_2^{-1}a_1^{-1}a_2^{-1},(a_1a_2)^6,y^2,a_1^{-1}y^2a_1,a_2^{-1}y^2a_2}\rangle\\
	&=&\langle{a_1,a_2}\mid{a_1a_2a_1a_2^{-1}a_1^{-1}a_2^{-1},(a_1a_2)^6}\rangle.
	\end{eqnarray*}
\end{enumerate}
\end{proof}

\section{Proof of Theorem~\ref{main-thm} for $g\geq1$ and $n\leq1$}\label{main-thm-low-n}

In this section, we prove Theorem~\ref{main-thm} for $g\geq1$ and $n\leq1$, using the presentation of $\T(N_{g,n})$ given in Theorems~\ref{FPT1}, \ref{FPT0} and Lemma~\ref{FPT}.

Let $\langle{X}\mid{Y}\rangle$ be the infinitely presented group with the presentation given in Theorem~\ref{main-thm}, and $\langle{X_0}\mid{Y_0}\rangle$ the presentation for $\T(N_{g,n})$ given in Theorems~\ref{FPT1}, \ref{FPT0} or Lemma~\ref{FPT}.
Denote by $F(X_0)$ the free group freely generated by $X_0$.
Let $p:F(X_0)\to\langle{X_0}\mid{Y_0}\rangle$ be the natural projection and $\eta:F(X_0)\to\langle{X}\mid{Y}\rangle$ the homomorphism defined as $\eta(x)=x$ for any $x\in{}X_0$.
We consider a correspondence
$$\psi:\langle{X_0}\mid{Y_0}\rangle\to\langle{X}\mid{Y}\rangle$$
satisfying $\psi\circ{}p=\eta$.
Showing the following proposition, we will obtain Theorem~\ref{main-thm} for $g\geq1$ and $n\leq1$.

\begin{prop}\label{psi-isom}
For $g\geq1$ and $n\leq1$, $\psi$ is an isomorphism.
\end{prop}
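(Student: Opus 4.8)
The plan is to show that the correspondence $\psi$ is a well-defined homomorphism with a two-sided inverse, by exhibiting an explicit homomorphism $\varphi$ in the opposite direction and checking $\varphi\circ\psi$ and $\psi\circ\varphi$ are the respective identities. Concretely, I would proceed in three stages: (i) construct a candidate homomorphism $\varphi:\langle X\mid Y\rangle\to\langle X_0\mid Y_0\rangle$; (ii) verify that $\psi$ is well-defined, i.e. that every relation in $Y_0$ maps to a consequence of the relations $Y$ (equivalently, lies in $\ker\eta$); and (iii) verify $\psi$ and $\varphi$ are mutually inverse. Since $\langle X_0\mid Y_0\rangle$ is genuinely a presentation of $\T(N_{g,n})$ by Theorems~\ref{FPT1}, \ref{FPT0} and Lemma~\ref{FPT}, establishing Proposition~\ref{psi-isom} is exactly establishing Theorem~\ref{main-thm} in the range $n\le 1$.

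For stage (i), note that each generator $t_{c;\theta}\in X$ is a Dehn twist about a two sided simple closed curve, hence a genuine element of $\T(N_{g,n})$; writing it as a word in the Stukow generators $X_0$ (using a change-of-coordinates principle: any two non-separating two sided curves of the same topological type are related by an element of $\M(N_{g,n})$, and elements of $\M(N_{g,n})$ are words in $X_0$ together with $y$ or $\varrho$, conjugation by which preserves $\T$) gives the assignment on generators. One must check this respects the relations (\ref{bound})--(\ref{lanterns}): relations (\ref{bound}) and (\ref{inverse}) are immediate, the conjugation relations (\ref{t-conj}) and the $2$-chain and lantern relations hold in $\M(N_{g,n})$ hence in $\T(N_{g,n})=\langle X_0\mid Y_0\rangle$. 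This gives $\varphi$. Because $\varphi(x)=x$ for $x\in X_0$ (each Stukow generator $a_i,b,c,e,f$ is literally some $t_{c;\theta}\in X$, while $y^2=t_\delta$ and $\varrho^2=1$, etc., are handled by the defining formulas), the composite $\varphi\circ\psi$ fixes $X_0$ pointwise, so $\varphi\circ\psi=\mathrm{id}$ provided $\psi$ is well-defined.

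For stage (ii), the main work, I would take the relations $Y_0$ from Theorems~\ref{FPT1}, \ref{FPT0}, \ref{FPT} one family at a time and show each becomes trivial in $\langle X\mid Y\rangle$. The braid and commutativity relations (A1)--(A4), ($\overline{\mathrm{A1}}$)--($\overline{\mathrm{A4}}$), ($\overline{\mathrm{B3}}$), ($\overline{\mathrm{B4}}$), ($\mathrm{A9}$), ($\overline{\mathrm{A9}}$) follow from the conjugation relations (\ref{t-conj}) via the standard reduction of a braid/commutativity relation to a conjugation relation when the curves meet in $\le 1$ point. The chain-type relations (A5), (A6), (A8), ($\overline{\mathrm{A5}}$), ($\overline{\mathrm{A6}}$), ($\overline{\mathrm{A8}}$) and (C1), ($\overline{\mathrm{C1}}$), ($\overline{\mathrm{C4}}$) are instances of chain relations, which by Remark~\ref{chain-rel} are consequences of (\ref{t-conj}), (\ref{2-chain}), (\ref{lanterns}). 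The relations ($\overline{\mathrm{B1}}$), ($\overline{\mathrm{B2}}$), ($\overline{\mathrm{B6}}$), ($\overline{\mathrm{B7}}$), ($\overline{\mathrm{B8}}$), which involve $y^2=t_\delta=\CP(\alpha)^2$, should be derived by interpreting $y=\CP(\alpha)$ and using Remarks~\ref{CP1}, \ref{CP2}: identities among crosscap pushing maps reduce, via those remarks, to the relations (\ref{bound}), (\ref{t-conj}), (\ref{lanterns}). The relations (C2), ($\overline{\mathrm{C2}}$), ($\overline{\mathrm{C3}}$), ($\overline{\mathrm{C5}}$), ($\mathrm{C3}$) and the "superfluous" clauses concern $\varrho$ (resp. $\bar\varrho$), which is not in $X$; here I would use the defining word for $\varrho$ coming from a chain relation, e.g. ($\mathrm{C1a}$) expresses $\varrho=(a_1\cdots a_{g-1})^g$, and verify the remaining $\varrho$-relations as identities among twists, again via chain and lantern relations. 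Finally the small-genus base cases from Lemma~\ref{FPT} ($g\le 3$) are checked by hand, with Remark~\ref{(2,0)} accounting for the $t_{c;\theta}^2=1$ phenomenon in $N_{2,0}$.

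Stage (iii): once $\psi$ is well-defined, $\varphi\circ\psi=\mathrm{id}$ is automatic as noted. For $\psi\circ\varphi=\mathrm{id}$ I would argue that $\psi$ is surjective — every generator $t_{c;\theta}\in X$ is a word in $X_0$ because, by the change-of-coordinates principle, $c$ is the image of one of the fixed curves $\alpha_i,\beta,\gamma,\varepsilon,\zeta$ (or $\delta$) under some $f\in\M(N_{g,n})$, $f$ is a word in $X_0\cup\{y\}$ (or $\cup\{\varrho\}$), and conjugating a twist in $X_0$ by $y$ or $\varrho$ lands back in $\T$; this computation is facilitated by the relation (\ref{t-conj}) and the formulas for $\PP,\CP$. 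Then $\psi$ surjective plus $\varphi\circ\psi=\mathrm{id}$ forces $\psi$ bijective. \textbf{The main obstacle} I anticipate is stage (ii) for the long braid/crosscap relations ($\overline{\mathrm{B1}}$)--($\overline{\mathrm{B8}}$) and the $\varrho$-relations: translating each Stukow word into a statement about specific curves on the surface, identifying the right auxiliary lantern or chain configuration, and running the Gervais--Luo reduction (Remark~\ref{chain-rel}) together with Remarks~\ref{CP1}--\ref{CP2} cleanly, is delicate bookkeeping — this is where most of the section's length will go, and where one must be careful that the orientations $\theta$ of regular neighborhoods are tracked consistently so that each invocation of (\ref{t-conj}) is with an $f\in X$ and compatible orientations.
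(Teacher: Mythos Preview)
Your plan is correct and follows essentially the same route as the paper: construct $\varphi$ in the reverse direction, verify $\psi$ is well-defined by deriving every relator of $Y_0$ from the relations $Y$, then show $\psi$ is surjective and conclude. Two small corrections to your sketch: first, stage~(i) is immediate in the paper---since $\langle X_0\mid Y_0\rangle=\T(N_{g,n})$ as a group, one simply sets $\varphi(t_{c;\theta})=t_{c;\theta}\in\T(N_{g,n})$ with no need to choose words in $X_0$; second, your bucketing is slightly off, as $(\overline{\mathrm{B7}}_1)$, $(\overline{\mathrm{B7}}_2)$ are handled purely by iterated conjugation~(\ref{t-conj}), whereas $(\overline{\mathrm{C1a}})$, $(\overline{\mathrm{C4a}})$, $(\overline{\mathrm{C3}})$, $(\overline{\mathrm{C4}})$ are \emph{not} plain chain relations and in the paper require exactly the crosscap-pushing manipulations (Remarks~\ref{CP1}, \ref{CP2}) you reserve for the $\overline{\mathrm{B}}$-family.
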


Let $\varphi:\langle{X}\mid{Y}\rangle\to\langle{X_0}\mid{Y_0}\rangle=\T(N_{g,n})$ be the homomorphism defined as $\varphi(t_{c;\theta})=t_{c;\theta}$ for any $t_{c;\theta}\in{X}$.
Since $\varphi(Y)=1$ in $\T(N_{g,n})$ clearly, $\varphi$ is well-defined.
By the definitions of $\psi$ and $\varphi$, it is clear that $\varphi\circ\psi$ is the identity map if $\psi$ is a homomorphism.
Hence in order to prove Proposition~\ref{psi-isom}, what we need is to show well-definedness and surjectivity of $\psi$.
In Sections~\ref{well-def-psi} and \ref{surj-psi}, we see well-definedness and surjectivity of $\psi$ respectively.

\subsection{Well-definedness of $\psi$}\label{well-def-psi}\

In order to see well-definedness of $\psi$, it suffices to show that $\psi(r)=1$ in $\langle{X}\mid{Y}\rangle$ for any $r\in{}Y_0$.
More precisely, we check that any relation and relator of $\T(N_{g,n})$ in Theorems~\ref{FPT1}, \ref{FPT0} and Lemma~\ref{FPT} are obtained from the relations (\ref{bound})-(\ref{lantern}) of $\T(N_{g,n})$ in Theorem~\ref{main-thm} and the relation $(4)^\prime$ assigned in Remark~\ref{chain-rel}.

First we notice that the relations $(\mathrm{A7})$, $(\overline{\mathrm{A7a}})$ and $(\overline{\mathrm{A7b}})$ hold since each of the pairs $(b_0,a_1)$, $(b_1,b)$, $(\bar{b}_0,a_1^{-1})$ for $g=6$, and $(\bar{b}_1,c)$ for $g=6$, $8$ coincides as mapping classes from the definition of loops in Figure~\ref{generator}.
We may also treat the relation $(\mathrm{C1a})$ as a definition of $\varrho$.
It is easy to check that any relator of $\T(N_{g,n})$ in Lemma~\ref{FPT} is obtained from the relations (\ref{bound})-(\ref{2-chain}) in Theorem~\ref{main-thm}.
In particular, for the relator $a^2$ of $\T(N_{2,0})$, recall Remark~\ref{(2,0)}.
The relations $(\mathrm{A1})$, $(\mathrm{A2})$, $(\mathrm{A3})$, $(\mathrm{A4})$, $(\overline{\mathrm{A1}}_1)$, $(\overline{\mathrm{A1}}_2)$, $(\overline{\mathrm{A2}}_1)$, $(\overline{\mathrm{A2}}_2)$, $(\overline{\mathrm{A2}}_3)$, $(\overline{\mathrm{A3}}_1)$, $(\overline{\mathrm{A3}}_2)$, $(\overline{\mathrm{A4}})$, $(\overline{\mathrm{B3}})$, $(\overline{\mathrm{B4}}_1)$, $(\overline{\mathrm{B4}}_2)$, $(\overline{\mathrm{B7}}_1)$, $(\overline{\mathrm{B7}}_2)$, $(\mathrm{A9a})$, $(\mathrm{A9b})$, $(\overline{\mathrm{A7c}})$, $(\overline{\mathrm{A9a}})$, $(\overline{\mathrm{A9b}})$ $(\mathrm{C2})$, $(\overline{\mathrm{C2}})$, $(\overline{\mathrm{C5}}_1)$, $(\overline{\mathrm{C2}}_1)$, $(\overline{\mathrm{C2}}_2)$, $(\overline{\mathrm{C2}}_3)$ and $(\overline{\mathrm{C5}}_2)$ are obtained by repeating the relation (\ref{t-conj}) in Theorem~\ref{main-thm}.
The relations $(\mathrm{A5})$, $(\mathrm{A6})$, $(\overline{\mathrm{A5}})$, $(\overline{\mathrm{A6}})$, $(\mathrm{A8})$, $(\overline{\mathrm{A8a}})$ and $(\overline{\mathrm{A8b}})$ come from relations of mapping class groups of orientable surfaces (see Theorem~1.3 in \cite{M}), and hence these relations are obtained from the relations (\ref{t-conj}), (\ref{2-chain}) and (\ref{lanterns}) in Theorem~\ref{main-thm} by Theorem in \cite{Lu}.
The relations $(\mathrm{C3})$ and $(\mathrm{C1b})$ are obtained from the relations (\ref{bound}) in Theorem~\ref{main-thm} and $(4)^\prime$ assigned in Remark~\ref{chain-rel}.
Thus it suffices to show that the relations $(\overline{\mathrm{B1}})$, $(\overline{\mathrm{B2}}_1)$, $(\overline{\mathrm{B2}}_2)$, $(\overline{\mathrm{B6}}_1)$, $(\overline{\mathrm{B6}}_2)$, $(\overline{\mathrm{B8}}_1)$, $(\overline{\mathrm{B8}}_2)$, $(\overline{\mathrm{C1a}})$, $(\overline{\mathrm{C4a}})$, $(\overline{\mathrm{C3}})$ and $(\overline{\mathrm{C4}})$ are satisfied in $\langle{X\mid{}Y}\rangle$.

Recall the simple closed curves and the simple loop as shown in Figure~\ref{generator}.

\subsubsection{On the relation $(\overline{\mathrm{B1}})$}\label{B1}\

The relation $(\overline{\mathrm{B1}})$ can be rewritten as follows.
\begin{eqnarray*}
&&
(a_2a_3a_1a_2ea_1a_3^{-1}\underset{(\overline{\mathrm{B4}}_1)}{\underline{e)(a_2}}\underset{(\mathrm{A1})}{\underline{a_3a_1}}a_2fa_1a_3^{-1}f)=1\\
&\iff&
\underline{a_2a_3}a_1(a_2ea_1a_3^{-1}a_2ea_1a_3a_2fa_1a_3^{-1}\underset{(\overline{\mathrm{B4}}_2)}{\underline{fa_2}}\underset{(\mathrm{A1})}{\underline{a_3a_1}})a_1^{-1}\underline{a_3^{-1}a_2^{-1}}=1\\
&\iff&
a_1\{(a_2ea_1)a_3^{-1}(a_2ea_1)a_3(a_2fa_1)a_3^{-1}(a_2fa_1)a_3\}a_1^{-1}=1.
\end{eqnarray*}
Similarly, the relation $(\overline{\mathrm{B2}}_1)$ can be rewritten as follows.
\begin{eqnarray*}
y^2
&=&
a_2\underset{(\overline{\mathrm{A2}}_1)}{\underline{a_1ea_1}}a_2a_1a_2a_1\underset{(\overline{\mathrm{B4}}_2)}{\underline{a_2f}}a_1a_2\\
&=&
a_2ea_1\underset{(\overline{\mathrm{B4}}_1)}{\underline{ea_2}}a_1a_2a_1f\underset{(\mathrm{A2})}{\underline{a_2a_1a_2}}\\
&=&
a_2ea_1a_2ea_1a_2\underset{(\overline{\mathrm{A2}}_3)}{\underline{a_1fa_1}}a_2a_1\\
&=&
a_2ea_1a_2ea_1a_2fa_1\underset{(\overline{\mathrm{B4}}_2)}{\underline{fa_2}}a_1\\
&=&
(a_2ea_1)^2(a_2fa_1)^2.
\end{eqnarray*}
Let $A$, $B$, $C$, $D$ and $E$ be simple closed curves with arrows as shown in Figure~\ref{B1-loops}.
Repeating the relation (\ref{t-conj}), we have $t_A=a_1(a_2ea_1)a_3^{-1}(a_2ea_1)^{-1}a_1^{-1}$, $t_B=a_1(a_2ea_1)^2a_3(a_2ea_1)^{-2}a_1^{-1}$ and $t_C=a_1(a_2fa_1)^{-1}a_3^{-1}(a_2fa_1)a_1^{-1}$.
Assuming that $(\overline{\mathrm{B2}}_1)$ holds in $\langle{X\mid{}Y}\rangle$, we calculate
\begin{eqnarray*}
a_1\{(a_2ea_1)a_3^{-1}(a_2ea_1)a_3(a_2fa_1)a_3^{-1}(a_2fa_1)a_3\}a_1^{-1}
&=&
t_At_By^2t_C(a_1a_3a_1^{-1})\\
&\overset{(\ref{bound}),(\ref{t-conj})}{=}&
t_At_By^2t_Ca_3t_Dt_E\\
&\overset{(\ref{lanterns})}{=}&
1.
\end{eqnarray*}
Note that $B\cup\alpha_3\cup{}D\cup{}E$ bounds $\Sigma_{0,4}$.
Thus the relation $(\overline{\mathrm{B1}})$ is satisfied in $\langle{X\mid{}Y}\rangle$, by Section~\ref{B2}.

\begin{figure}[htbp]
\includegraphics{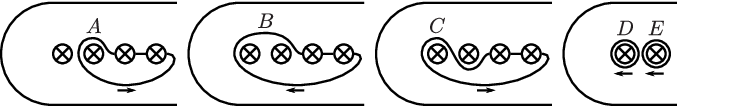}
\caption{}\label{B1-loops}
\end{figure}

\subsubsection{On the relations $(\overline{\mathrm{B2}}_1)$ and $(\overline{\mathrm{B2}}_2)$}\label{B2}\

Let $A$, $B$, $C$ and $D$ be simple closed curves with arrows as shown in Figure~\ref{B2-loops}.
Repeating the relation (\ref{t-conj}), we have $t_A=a_2a_1ea_1^{-1}a_2^{-1}$ and $t_B=a_2a_1f^{-1}a_1^{-1}a_2^{-1}$.
In addition, by the relations (\ref{bound}) and $(4)^\prime$ we have $t_C=(fa_1a_2)^4$.
Hence we calculate
\begin{eqnarray*}
y^2
&\overset{(\ref{lanterns})}{=}&
t_At_Bt_Ct_Da_1a_1^{-1}\\
&\overset{(\ref{bound})}{=}&
(a_2a_1ea_1^{-1}a_2^{-1})(a_2a_1f^{-1}a_1^{-1}a_2^{-1})(fa_1a_2)^4\\
&=&
a_2a_1ef^{-1}a_1^{-1}\underset{(\overline{\mathrm{B4}}_2)}{\underline{a_2^{-1}f}}a_1a_2fa_1\underset{(\overline{\mathrm{B4}}_2)}{\underline{a_2f}}a_1a_2fa_1a_2\\
&=&
a_2a_1e\underset{(\overline{\mathrm{A2}}_3)}{\underline{f^{-1}a_1^{-1}f}}a_2^{-1}a_1a_2\underset{(\overline{\mathrm{A2}}_3)}{\underline{fa_1f}}a_2a_1a_2fa_1a_2\\
&=&
a_2a_1ea_1\underset{(\mathrm{A2}),(\overline{\mathrm{B4}}_2)}{\underline{f^{-1}a_1^{-1}a_2^{-1}a_1a_2a_1f}}a_1a_2a_1a_2fa_1a_2\\
&=&
a_2a_1ea_1a_2a_1a_2a_1a_2fa_1a_2.
\end{eqnarray*}
Note that $C\cup{}D\cup\alpha_1$ bounds $\Sigma_{0,4}$.
Thus the relation $(\overline{\mathrm{B2}}_1)$ is satisfied in $\langle{X\mid{}Y}\rangle$.

\begin{figure}[htbp]
\includegraphics{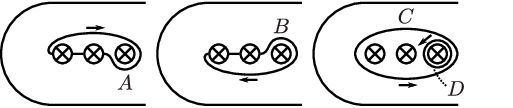}
\caption{}\label{B2-loops}
\end{figure}

Moreover, similar to the transformation of the relation $(\overline{\mathrm{B2}}_1)$ of Section~\ref{B1}, the relation $(\overline{\mathrm{B2}}_2)$ can be rewritten as
$$(a_2ea_1)^2(a_2fa_1)^4(a_2ea_1)^2=1.$$
By the relations (\ref{bound}) and $(4)^\prime$, we have $t_C=(a_2fa_1)^4$ and $t_C^{-1}=(a_2ea_1)^4$.
Hence we calculate
\begin{eqnarray*}
(a_2ea_1)^2(a_2fa_1)^4(a_2ea_1)^2
&=&
(a_2ea_1)^2t_C(a_2ea_1)^2\\
&\overset{(\ref{t-conj})}{=}&
(a_2ea_1)^4t_C\\
&=&
t_C^{-1}t_C\\
&=&
1.
\end{eqnarray*}
Thus the relation $(\overline{\mathrm{B2}}_2)$ is satisfied in $\langle{X\mid{}Y}\rangle$.

\subsubsection{On the relations $(\overline{\mathrm{B6}}_1)$ and $(\overline{\mathrm{B6}}_2)$}\

Let $A$, $B$, $C$, $D$ and $E$ be simple closed curves with arrows as shown in Figure~\ref{B6-loops}.
Repeating the relation (\ref{t-conj}), we have $t_A=a_1a_2a_3f^{-1}a_3^{-1}a_2^{-1}a_1^{-1}$ and $t_B=a_2^{-1}a_3^{-1}e^{-1}a_3a_2$.
Then we calculate
\begin{eqnarray*}
bcy^2
&\overset{(\ref{lanterns})}{=}&
t_Ca_1a_1^{-1}a_3\\
&=&
t_Ca_3,\\
t_At_By^2
&\overset{(\ref{lanterns})}{=}&
t_Ct_Dt_Ea_3\\
&\overset{(\ref{bound})}{=}&
t_Ca_3,
\end{eqnarray*}
and hence $bc=t_At_B$.
Note that $C\cup\alpha_1\cup\alpha_3$ and $C\cup{}D\cup{}E\cup\alpha_3$ bound $\Sigma_{0,4}$.
Thus the relation $(\overline{\mathrm{B6}}_1)$ is satisfied in $\langle{X\mid{}Y}\rangle$.

\begin{figure}[htbp]
\includegraphics{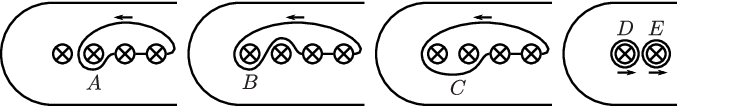}
\caption{}\label{B6-loops}
\end{figure}

In addition, repeating the relation (\ref{t-conj}), we have $y^2by^{-2}=t_{y(\gamma)}$, $a_1^{-1}e^{-1}a_3a_2a_3^{-1}ea_1=t_{y(A)}$ and $ea_3^{-1}y^2a_2y^{-2}a_3e^{-1}=t_{y(B)}$.
Then we calculate
\begin{eqnarray*}
t_{y(\beta)}t_{y(\gamma)}y^2
&\overset{(\ref{lanterns})}{=}&
t_Ca_1a_1^{-1}a_3\\
&=&
t_Ca_3,\\
t_{y(A)}t_{y(B)}y^2
&\overset{(\ref{lanterns})}{=}&
t_Ct_{y(D)}t_{y(E)}a_3\\
&\overset{(\ref{bound})}{=}&
t_Ca_3,
\end{eqnarray*}
and hence $t_{y(\beta)}t_{y(\gamma)}=t_{y(A)}t_{y(B)}$.
Note that $C\cup{}y(D)\cup{}y(E)\cup\alpha_3=y(C\cup{}D\cup{}E\cup\alpha_3)$ bounds $\Sigma_{0,4}$.
Thus, since $c=t_{y(\beta)}$, the relation $(\overline{\mathrm{B6}}_2)$ is satisfied in $\langle{X\mid{}Y}\rangle$.

\subsubsection{On the relations $(\overline{\mathrm{B8}}_1)$ and $(\overline{\mathrm{B8}}_2)$}\

Let $A$, $B$, $C$ and $D$ be simple closed curves with arrows as shown in Figure~\ref{B8-loops}.
Repeating the relation (\ref{t-conj}), we have $t_A=(a_1ea_3^{-1}a_4^{-1})c(a_4a_3e^{-1}a_1^{-1})$, $t_B=(a_1^{-1}a_2^{-1}a_3^{-1}a_4^{-1})b^{-1}(a_4a_3a_2a_1)$ and $t_C=(a_3^{-1}a_2^{-1}e^{-1}a_3)a_4(a_3^{-1}ea_2a_3)$.
By the relation (\ref{lanterns}) we see
$$t_At_Da_2a_4=t_Ce^{-1}t_B^{-1},$$
and by the relations (\ref{bound}) and (\ref{t-conj}) we see
$$t_At_B=a_4^{-1}t_Ca_2^{-1}e^{-1}.$$
Note that $A\cup{}D\cup\alpha_2\cup\alpha_4$ bounds $\Sigma_{0,4}$.
Thus the relation $(\overline{\mathrm{B8}}_1)$ is satisfied in $\langle{X\mid{}Y}\rangle$.

\begin{figure}[htbp]
\includegraphics{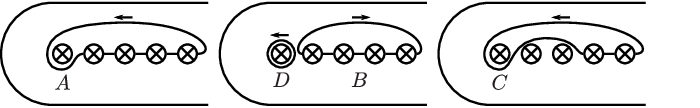}
\caption{}\label{B8-loops}
\end{figure}

In addition, repeating the relation (\ref{t-conj}), we have $(a_1^{-1}a_2^{-1}a_3^{-1}a_4^{-1})b(a_4a_3a_2a_1)=t_{y^{-1}(A)}$, $(a_1fa_3^{-1}a_4^{-1})y^{-2}c^{-1}y^2(a_4a_3f^{-1}a_1^{-1})=t_{y^{-1}(B)}$ and $(a_3^{-1}fa_2a_3)a_4(a_3^{-1}a_2^{-1}f^{-1}a_3)=t_{y^{-1}(C)}$.
By the relation (\ref{lanterns}) we see
$$t_{y^{-1}(A)}t_{y^{-1}(D)}f^{-1}a_4=t_{y^{-1}(C)}a_2t_{y^{-1}(B)}^{-1},$$
and by the relations (\ref{bound}) and (\ref{t-conj}) we see
$$t_{y^{-1}(A)}t_{y^{-1}(B)}=a_4^{-1}t_{y^{-1}(C)}fa_2.$$
Note that $y^{-1}(A)\cup{}y^{-1}(D)\cup\zeta\cup\alpha_4=y^{-1}(A\cup{}D\cup\alpha_2\cup\alpha_4)$ bounds $\Sigma_{0,4}$.
Thus the relation $(\overline{\mathrm{B8}}_2)$ is satisfied in $\langle{X\mid{}Y}\rangle$.

\subsubsection{On the relation $(\overline{\mathrm{C1a}})$}\label{C1a}\

We calculate
\begin{eqnarray*}
\varrho
&\overset{(\mathrm{C1a})}{=}&
(a_1a_2\cdots{}a_{g-1})^g\\
&=&
a_1a_2\cdots{}a_{g-1}\underset{(\mathrm{A1}),(\mathrm{A2})}{\underline{(a_1a_2\cdots{}a_{g-1})^{g-2}a_1}}a_2\cdots{}a_{g-1}\\
&=&
a_1a_2\cdots{}a_{g-1}a_{g-1}(a_1a_2\cdots{}a_{g-1})^{g-2}a_2\cdots{}a_{g-1}\\
&=&
a_1a_2\cdots{}a_{g-1}a_{g-1}\underset{(\mathrm{A1}),(\mathrm{A2})}{\underline{(a_1a_2\cdots{}a_{g-1})^{g-3}a_1}}(a_2\cdots{}a_{g-1})^2\\
&=&
a_1a_2\cdots{}a_{g-1}a_{g-1}a_{g-2}(a_1a_2\cdots{}a_{g-1})^{g-3}(a_2\cdots{}a_{g-1})^2\\
&\vdots&\\
&=&
a_1a_2a_3\cdots{}a_{g-1}a_{g-1}\cdots{}a_3a_2a_1(a_2a_3\cdots{}a_{g-1})^{g-1}\\
&\overset{(\mathrm{A1}),(\mathrm{A2})}{=}&
(a_2a_3\cdots{}a_{g-1})^{g-1}a_1a_2a_3\cdots{}a_{g-1}a_{g-1}\cdots{}a_3a_2a_1.
\end{eqnarray*}
Let $\varrho^\prime=(a_1^{-1}e^{-1}a_3\cdots{}a_{g-1})^g$.
Similarly, we calculate
\begin{eqnarray*}
\varrho^\prime
&=&
(a_1^{-1}e^{-1}a_3\cdots{}a_{g-1})^g\\
&=&
a_1^{-1}e^{-1}a_3\cdots{}a_{g-1}\underset{(\mathrm{A1}),(\mathrm{A2}),(\overline{\mathrm{A1}}_1),(\overline{\mathrm{A2}}_1)(\overline{\mathrm{A2}}_2)}{\underline{(a_1^{-1}e^{-1}a_3\cdots{}a_{g-1})^{g-2}a_1^{-1}}}e^{-1}a_3\cdots{}a_{g-1}\\
&=&
a_1^{-1}e^{-1}a_3\cdots{}a_{g-1}a_{g-1}(a_1^{-1}e^{-1}a_3\cdots{}a_{g-1})^{g-2}e^{-1}a_3\cdots{}a_{g-1}\\
&=&
a_1^{-1}e^{-1}a_3\cdots{}a_{g-1}a_{g-1}\underset{(\mathrm{A1}),(\mathrm{A2}),(\overline{\mathrm{A1}}_1),(\overline{\mathrm{A2}}_1)(\overline{\mathrm{A2}}_2)}{\underline{(a_1^{-1}e^{-1}a_3\cdots{}a_{g-1})^{g-3}a_1^{-1}}}(e^{-1}a_3\cdots{}a_{g-1})^2\\
&=&
a_1^{-1}e^{-1}a_3\cdots{}a_{g-1}a_{g-1}a_{g-2}(a_1^{-1}e^{-1}a_3\cdots{}a_{g-1})^{g-3}(e^{-1}a_3\cdots{}a_{g-1})^2\\
&\vdots&\\
&=&
a_1^{-1}e^{-1}a_3\cdots{}a_{g-1}a_{g-1}\cdots{}a_3e^{-1}a_1^{-1}(e^{-1}a_3\cdots{}a_{g-1})^{g-1}\\
&\underset{(\overline{\mathrm{A1}}_1),(\overline{\mathrm{A2}}_1)(\overline{\mathrm{A2}}_2)}{\overset{(\mathrm{A1}),(\mathrm{A2})}{=}}&
(a_2a_3\cdots{}a_{g-1})^{g-1}a_1a_2a_3\cdots{}a_{g-1}a_{g-1}\cdots{}a_3a_2a_1.
\end{eqnarray*}
We show that $\varrho^\prime=y^2\varrho$.
Since a regular neighborhood of $\alpha_1\cup\varepsilon\cup\alpha_3\cup\cdots\cup\alpha_{g-1}$ is diffeomorphic to $\Sigma_{\frac{g-1}{2},1}$, by the relations (\ref{bound}) and $(4)^\prime$, we have $(\varrho^\prime)^2=1$.
We calculate
\begin{eqnarray*}
\varrho^\prime\varrho^{-1}
&=&
(\varrho^\prime)^{-1}\varrho^{-1}\\
&=&
a_1ea_3^{-1}\cdots{}a_{g-1}^{-1}a_{g-1}^{-1}\cdots{}a_3^{-1}ea_1\underset{(\mathrm{C2}),(\overline{\mathrm{C2}})}{\underline{(e^{-1}a_3\cdots{}a_{g-1})^{1-g}\varrho^{-1}}}\\
&=&
a_1ea_3^{-1}\cdots{}a_{g-1}^{-1}a_{g-1}^{-1}\cdots{}a_3^{-1}ea_1\underline{\varrho^{-1}}(f^{-1}a_3\cdots{}a_{g-1})^{1-g}\\
&=&
a_1ea_3^{-1}\cdots{}a_{g-1}^{-1}a_{g-1}^{-1}\cdots{}a_3^{-1}\underline{ea_1}\\
&&
\underline{a_1^{-1}a_2^{-1}}a_3^{-1}\cdots{}a_{g-1}^{-1}a_{g-1}^{-1}\cdots{}a_3^{-1}a_2^{-1}a_1^{-1}(a_2a_3\cdots{}a_{g-1})^{1-g}\\
&&
(f^{-1}a_3\cdots{}a_{g-1})^{1-g}\\
&\overset{(\overline{\mathrm{B4}}_1)}{=}&
a_1\{ea_2(a_2^{-1}a_3^{-1}\cdots{}a_{g-1}^{-1}a_{g-1}^{-1}\cdots{}a_3^{-1}a_2^{-1})\}^2a_1^{-1}\\
&&(a_2a_3\cdots{}a_{g-1})^{1-g}(f^{-1}a_3\cdots{}a_{g-1})^{1-g}.
\end{eqnarray*}
It suffices to show the equality
\begin{align*}
&y^2(f^{-1}a_3\cdots{}a_{g-1})^{g-1}(a_2a_3\cdots{}a_{g-1})^{g-1}\\&=a_1\{ea_2(a_2^{-1}a_3^{-1}\cdots{}a_{g-1}^{-1}a_{g-1}^{-1}\cdots{}a_3^{-1}a_2^{-1})\}^2a_1^{-1}.
\end{align*}

Let $\gamma_1$, $\gamma_2$ and $\gamma_3$ be oriented simple loops based at $\ast$ which is a point obtained by the blowdown with respect to the first crosscap in $N_{g,0}$ when $g$ is odd, as shown in Figure~\ref{C1a-loops}~(a), and $A$ and $B$ simple closed curves with arrows as shown in Figure~\ref{C1a-loops}~(b).
By the relations (\ref{bound}), $(4)^\prime$ and Remark~\ref{CP1}, we have $y^2=\CP(\alpha^2)$, $(a_2a_3\cdots{}a_{g-1})^{g-1}=\CP(\gamma_1)$, $(f^{-1}a_3\cdots{}a_{g-1})^{g-1}=\CP(\gamma_2)$ and $t_At_B=\CP(\gamma_3)$.
Hence by Remark~\ref{CP2}, we calculate
\begin{eqnarray*}
y^2(f^{-1}a_3\cdots{}a_{g-1})^{g-1}(a_2a_3\cdots{}a_{g-1})^{g-1}
&=&
\CP(\alpha^2)\CP(\gamma_2)\CP(\gamma_1)\\
&=&
\CP(\alpha^2)\CP(\gamma_1\gamma_2)\\
&=&
\CP(\gamma_1\gamma_2\alpha^2)\\
&=&
\CP(\gamma_3)\\
&=&
t_At_B\\
&\overset{(\ref{bound})}{=}&
t_A.
\end{eqnarray*}
On the other hand, we see
\begin{align*}
&
a_1\{ea_2(a_2^{-1}a_3^{-1}\cdots{}a_{g-1}^{-1}a_{g-1}^{-1}\cdots{}a_3^{-1}a_2^{-1})\}^2a_1^{-1}\\
=&
a_1(ea_2)a_1^{-1}\cdot{}a_1(a_2a_3\cdots{}a_{g-1}a_{g-1}\cdots{}a_3a_2)^{-1}ea_2(a_2a_3\cdots{}a_{g-1}a_{g-1}\cdots{}a_3a_2)a_1^{-1}\\
&
a_1(a_2a_3\cdots{}a_{g-1}a_{g-1}\cdots{}a_3a_2)^{-2}a_1^{-1}.
\end{align*}
In addition, by the calculation similar to the beginning of Section~\ref{C1a}, we see
\begin{eqnarray*}
(a_2a_3\cdots{}a_{g-1})^{2(g-1)}
&=&
\left\{(a_2a_3\cdots{}a_{g-1})^{g-1}\right\}^2\\
&\overset{(\mathrm{A1}),(\mathrm{A2})}{=}&
\left\{(a_3\cdots{}a_{g-1})^{g-2}(a_2a_3\cdots{}a_{g-1}a_{g-1}\cdots{}a_3a_2)\right\}^2\\
&\overset{(\mathrm{A1}),(\mathrm{A2})}{=}&
(a_3\cdots{}a_{g-1})^{2(g-2)}(a_2a_3\cdots{}a_{g-1}a_{g-1}\cdots{}a_3a_2)^2,
\end{eqnarray*}
and hence
\begin{eqnarray*}
&&
a_1(a_2a_3\cdots{}a_{g-1}a_{g-1}\cdots{}a_3a_2)^{-2}a_1^{-1}\\
&=&
a_1(a_2a_3\cdots{}a_{g-1})^{-2(g-1)}(a_3\cdots{}a_{g-1})^{2(g-2)}a_1^{-1}\\
&\overset{(\mathrm{A1})}{=}&
a_1(a_2a_3\cdots{}a_{g-1})^{-2(g-1)}a_1^{-1}(a_3\cdots{}a_{g-1})^{2(g-2)}.
\end{eqnarray*}
Let $\gamma_4$, $\gamma_5$, $\gamma_6$ and $\gamma_7$ be oriented simple loops based at $\ast$ which is a point obtained by the blowdown with respect to the second crosscap, as shown in Figure~\ref{C1a-loops}~(a), $\gamma_{5,1}$, $\gamma_{5,2}$ and $\gamma_{5,3}$ oriented simple loops based at $\ast$ which is a point obtained by the blowdown with respect to the first crosscap, as shown in Figure~\ref{C1a-loops}~(b), and $C$ a simple closed curve with an arrow as shown in Figure~\ref{C1a-loops}~(c).
By the relations (\ref{t-conj}), $(4)^\prime$ and Remark~\ref{CP1}, we have $a_1(a_2a_3\cdots{}a_{g-1})^{-(g-1)}a_1^{-1}=\CP(\gamma_4)$,
\begin{eqnarray*}
&&a_1(a_2a_3\cdots{}a_{g-1}a_{g-1}\cdots{}a_3a_2)^{-1}ea_2(a_2a_3\cdots{}a_{g-1}a_{g-1}\cdots{}a_3a_2)a_1^{-1}\\
&=&a_1(a_2a_3\cdots{}a_{g-1}a_{g-1}\cdots{}a_3a_2)^{-1}\CP(\gamma_{5,1})(a_2a_3\cdots{}a_{g-1}a_{g-1}\cdots{}a_3a_2)a_1^{-1}\\
&=&a_1(a_{g-1}\cdots{}a_3a_2)^{-1}\CP(\gamma_{5,2})(a_{g-1}\cdots{}a_3a_2)a_1^{-1}\\
&=&a_1\CP(\gamma_{5,3})a_1^{-1}\\
&=&\CP(\gamma_5),
\end{eqnarray*}
$a_1(ea_2)a_1^{-1}=\CP(\gamma_6)$, $t_At_C^{-1}=\CP(\gamma_7)$ and $(a_3\cdots{}a_{g-1})^{2(g-2)}=t_C$.
Hence by Remark~\ref{CP2}, we calculate
\begin{eqnarray*}
a_1\{ea_2(a_2^{-1}a_3^{-1}\cdots{}a_{g-1}^{-1}a_{g-1}^{-1}\cdots{}a_3^{-1}a_2^{-1})\}^2a_1^{-1}
&=&
\CP(\gamma_6)\CP(\gamma_5)\CP(\gamma_4)^2t_C\\
&=&
\CP(\gamma_6)\CP(\gamma_4\gamma_5)\CP(\gamma_4)t_C\\
&=&
\CP(\gamma_4\gamma_5\gamma_6)\CP(\gamma_4)t_C\\
&=&
\CP(\gamma_4^2\gamma_5\gamma_6)t_C\\
&=&
\CP(\gamma_7)t_C\\
&=&
t_A.
\end{eqnarray*}
Thus the relation $(\overline{\mathrm{C1a}})$ is satisfied in $\langle{X\mid{}Y}\rangle$.

\begin{figure}[htbp]
\subfigure[]{\includegraphics{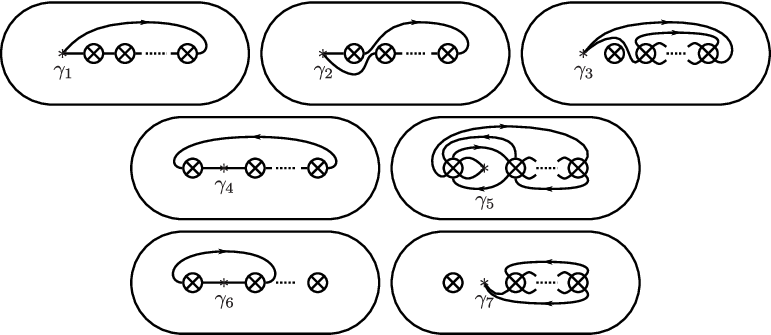}}
\subfigure[]{\includegraphics{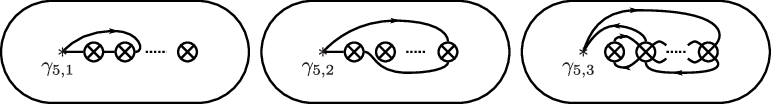}}
\subfigure[]{\includegraphics{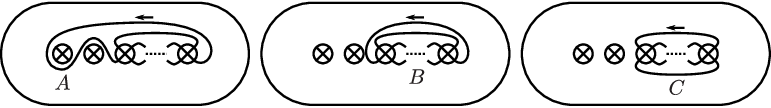}}
\caption{}\label{C1a-loops}
\end{figure}

\subsubsection{On the relation $(\overline{\mathrm{C4a}})$}\label{C4a}\

For $\displaystyle1\leq{i}\leq{\frac{g-1}{2}}$, let $\gamma_i$ be an oriented simple loop based at $\ast$ which is a point obtained by the blowdown with respect to the first crosscap, as shown in Figure~\ref{C4a-loops}, and $\Phi=e^{-1}a_3\cdots{}a_{g-1}$.

We now prove the following lemma.

\begin{lem}\label{Phi-alpha_i}
In $\langle{X\mid{}Y}\rangle$, we have $\CP(\gamma_1)\Phi^2\CP(\gamma_i)=\CP(\gamma_{i+1})\Phi^2$ for $\displaystyle1\leq{i}\leq{\frac{g-3}{2}}$.
\end{lem}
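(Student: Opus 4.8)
The plan is to prove this as an identity among crosscap pushing maps, exactly in the spirit of the computations already carried out for $(\overline{\mathrm{C1a}})$. First I would observe that $\Phi^2 = (e^{-1}a_3\cdots a_{g-1})^2$ can be realized as a single value $\CP(\mu)$ for an appropriate two sided simple loop $\mu$ based at $\ast$ (the point obtained by blowing down the first crosscap); indeed $\Phi^{g-1}$ already appeared in the proof of $(\overline{\mathrm{C1a}})$ as (essentially) a crosscap push, and by Remark~\ref{CP1} together with the chain relations $(4)^\prime$ the element $\Phi^2$ is $\CP$ of the loop that goes twice around the corresponding chain. So the left and right sides of the claimed equality both become products of three crosscap pushes $\CP(\mu)\CP(\mu)\CP(\gamma_i)$ versus $\CP(\gamma_{i+1})\CP(\mu)\CP(\mu)$ — wait, more precisely $\CP(\gamma_1)\CP(\mu)\CP(\mu)\CP(\gamma_i)$ versus $\CP(\gamma_{i+1})\CP(\mu)\CP(\mu)$, where I must also identify $\CP(\gamma_1)\cdot(\text{extra})$ correctly; the point is that everything on both sides is a product of $\CP$-values of explicit loops.

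Next I would use Remark~\ref{CP2}: whenever two of these loops meet only at $\ast$ (transversally or tangentially), the relation $\CP(\beta)\CP(\alpha) = \CP(\alpha\beta)$ holds in $\langle X \mid Y\rangle$, being a consequence of $(\ref{bound})$, $(\ref{t-conj})$ and $(\ref{lanterns})$. So the strategy is purely combinatorial/topological: choose the loops $\gamma_i$ (as drawn in Figure~\ref{C4a-loops}) and $\mu$ so that the relevant products of loops in $\pi_1(S,\ast)$ can be simplified, then read off from the picture that
$$\gamma_1 \cdot \mu^{\,k} \cdot \gamma_i \;=\; \gamma_{i+1}\cdot \mu^{\,k}$$
as elements of $\pi_1(S,\ast)$ for the right exponent $k$, with all intermediate concatenations taking place along loops meeting only at $\ast$. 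Each such concatenation is then upgraded to an equality of $\CP$-values by Remark~\ref{CP2}, and stringing them together yields the lemma. This is the same bootstrapping pattern used to verify $(\overline{\mathrm{C1a}})$, where $\CP(\alpha)\CP(\gamma_2)\CP(\gamma_1) = \CP(\gamma_1\gamma_2\alpha)$ was deduced by two applications of $(\ref{lanterns})$.

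The main obstacle I expect is bookkeeping the orientations of the regular neighborhoods: Remark~\ref{CP1} expresses $\CP(\alpha)$ as $t_{\tilde\gamma_1;\tilde\theta_1}t_{\tilde\gamma_2;\tilde\theta_2}^{-1}$ with specific compatible orientations, and when loops are concatenated one must check that the induced orientations on $N_{g,n}$ match up so that the telescoping product of boundary twists actually collapses — getting a sign or an orientation wrong would break the cancellation. A secondary subtlety is that $\CP$ need not be injective (cf.\ the exceptional case for $\PP$), so I should phrase everything as identities that hold in $\langle X \mid Y\rangle$ directly, deriving each concatenation identity from $(\ref{bound})$, $(\ref{t-conj})$, $(\ref{lanterns})$ and $(4)^\prime$ rather than invoking any faithfulness. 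Once the picture in Figure~\ref{C4a-loops} is set up so that $\gamma_1\mu^k\gamma_i$ and $\gamma_{i+1}\mu^k$ are visibly the same loop and the intermediate pieces pairwise meet only at $\ast$, the proof is a short chain of $(\ref{lanterns})$-applications; this lemma will then feed into the verification of $(\overline{\mathrm{C4a}})$ by iterating from $i=1$ up to $i=\frac{g-3}{2}$.
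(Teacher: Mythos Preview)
Your plan has a genuine gap: the step where you assert that $\Phi^2=(e^{-1}a_3\cdots a_{g-1})^2$ can be written as $\CP(\mu)$ for a two sided simple loop $\mu$ is not justified, and in fact it is generally false. Remark~\ref{CP1} only tells you that $\CP$ of a two sided simple loop is a bounding pair map $t_{\tilde\gamma_1}t_{\tilde\gamma_2}^{-1}$; it does not let you go the other way and realize an arbitrary word in Dehn twists as a single $\CP$-value. The chain relation $(4)'$ shows that $\Phi^{g-1}$ is a product of two boundary twists (whence $\CP(\gamma_{(g-1)/2})=\Phi^{1-g}$), but there is no such relation for the exponent $2$ when $g>3$, so ``the loop that goes twice around the corresponding chain'' is not simple and Remarks~\ref{CP1}--\ref{CP2} do not apply to it. Consequently the identity $\gamma_1\mu^k\gamma_i=\gamma_{i+1}\mu^k$ in $\pi_1(S,\ast)$ that you want to read off the picture is not available, and the chain of $(\ref{lanterns})$-applications you sketch cannot be carried out.

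The paper avoids this entirely by not trying to interpret $\Phi^2$ as a crosscap push. Since $\Phi^2$ is a word in $X$, repeated use of relation~(\ref{t-conj}) alone gives $\Phi^2\CP(\gamma_i)\Phi^{-2}=\CP(\delta_i)$ for the explicit simple loop $\delta_i$ obtained by letting $\Phi^2$ act on $\gamma_i$ (this is just conjugating each of the two Dehn twists in $\CP(\gamma_i)$). One then checks from the picture that $\gamma_1$ and $\delta_i$ meet only at $\ast$ with $\delta_i\gamma_1=\gamma_{i+1}$, so a single application of Remark~\ref{CP2} yields $\CP(\gamma_1)\CP(\delta_i)=\CP(\gamma_{i+1})$, and the lemma follows immediately. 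The moral is that $\Phi^2$ should be moved past $\CP(\gamma_i)$ by conjugation rather than absorbed into the $\CP$-calculus.
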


\begin{proof}
Let $\delta_i$ be an oriented simple loop based at $\ast$ which is a point obtained by the blowdown with respect to the first crosscap, as shown in Figure~\ref{C4a-loops}.
By Remark~\ref{CP1}, repeating the relation~(\ref{t-conj}), we have $\Phi^2\CP(\gamma_i)\Phi^{-2}=\CP(\delta_i)$.
In addition, by Remarks~\ref{CP1} and \ref{CP2}, we have $\CP(\gamma_1)\CP(\delta_i)=\CP(\gamma_{i+1})$, and hence the claim holds.
\end{proof}

\begin{figure}[htbp]
\includegraphics{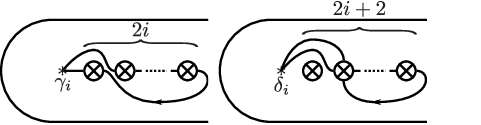}
\caption{}\label{C4a-loops}
\end{figure}

By the relation $(4)^\prime$ and Remark~\ref{CP1}, we have $a_2e=\CP(\gamma_1)$ and $\CP(\gamma_\frac{g-1}{2})=\Phi^{1-g}$.
Hence by Lemma~\ref{Phi-alpha_i}, we calculate
\begin{eqnarray*}
(a_2a_3\cdots{}a_{g-1}e^{-1}a_3\cdots{}a_{g-1})^{\frac{g-1}{2}}
&=&
(a_2e(e^{-1}a_3\cdots{}a_{g-1})^2)^{\frac{g-1}{2}}\\
&=&
(\CP(\gamma_1)\Phi^2)^{\frac{g-1}{2}}\\
&=&
(\CP(\gamma_1)\Phi^2)^{\frac{g-5}{2}}\CP(\gamma_2)\Phi^4\\
&=&
(\CP(\gamma_1)\Phi^2)^{\frac{g-7}{2}}\CP(\gamma_3)\Phi^6\\
&\vdots&\\
&=&
\CP(\gamma_{\frac{g-1}{2}})\Phi^{g-1}\\
&=&
1.
\end{eqnarray*}
Thus the relation $(\overline{\mathrm{C4a}})$ is satisfied in $\langle{X\mid{}Y}\rangle$.

\subsubsection{On the relation $(\overline{\mathrm{C3}})$}\label{(C3)}\

In $\M(N_{g,0})$, $\bar{\varrho}$ is defined as $\bar{\varrho}=y\varrho$ and we have the relation $\varrho=(y^{-1}a_2a_3\cdots{}a_{g-1}ya_2a_3\cdots{}a_{g-1})^{\frac{g-2}{2}}y^{-1}a_2a_3\cdots{}a_{g-1}$ (see Theorem~2.2 in \cite{St4}).
In addition, note that Lemma~\ref{Phi-alpha_i} holds even if $g$ is even.
Hence we calculate
\begin{eqnarray*}
\bar{\varrho}
&=&
y(y^{-1}a_2a_3\cdots{}a_{g-1}ya_2a_3\cdots{}a_{g-1})^{\frac{g-2}{2}}y^{-1}a_2a_3\cdots{}a_{g-1}\\
&=&
(a_2a_3\cdots{}a_{g-1}ya_2a_3\cdots{}a_{g-1}y^{-1})^{\frac{g-2}{2}}a_2a_3\cdots{}a_{g-1}\\
&=&
(a_2a_3\cdots{}a_{g-1}e^{-1}a_3\cdots{}a_{g-1})^{\frac{g-2}{2}}a_2a_3\cdots{}a_{g-1}\\
&=&
(\CP(\gamma_1)\Phi^2)^{\frac{g-2}{2}}\CP(\gamma_1)\Phi\\
&=&
(\CP(\gamma_1)\Phi^2)^{\frac{g-4}{2}}\CP(\gamma_2)\Phi^3\\
&=&
(\CP(\gamma_1)\Phi^2)^{\frac{g-6}{2}}\CP(\gamma_3)\Phi^5\\
&\vdots&\\
&=&
\CP(\gamma_1)\Phi^2\CP(\gamma_{\frac{g-2}{2}})\Phi^{g-3}\\
&=&
\CP(\gamma_1)(\Phi^2\CP(\gamma_{\frac{g-2}{2}})\Phi^{-2})\Phi^{g-1},
\end{eqnarray*}
where $\gamma_i$ is defined in Section~\ref{C4a}.
Let $\delta_1$ and $\delta_2$ be oriented simple loops based at $\ast$ which is a point obtained by the blowdown with respect to the first crosscap, as shown in Figure~\ref{C3-loops}~(a).
Repeating the relation~(\ref{t-conj}), we have $\Phi^2\CP(\gamma_{\frac{g-2}{2}})\Phi^{-2}=\CP(\delta_1)$.
In addition, by Remark~\ref{CP2}, we have $\CP(\gamma_1)\CP(\delta_1)=\CP(\delta_2)$.
Let $\delta_3$ and $\delta_4$ be oriented simple loops based at $\ast$ which is a point obtained by the blowdown with respect to the first crosscap, as shown in Figure~\ref{C3-loops}~(a).
Repeating the relation~(\ref{t-conj}), we have $\Phi^{g-1}\CP(\delta_2)\Phi^{1-g}=\CP(\delta_3)$.
In addition, by Remark~\ref{CP2}, we have $\CP(\delta_2)\CP(\delta_3)=\CP(\delta_4)$.
Let $A$ and $B$ be simple closed curves with arrows as shown in Figure~\ref{C3-loops}~(b).
By the relation $(4)^\prime$ and Remark~\ref{CP1}, we have $\Phi^{2g-2}=t_A$ and $\CP(\delta_4)=t_Bt_A^{-1}$.
Hence we calculate
\begin{eqnarray*}
\bar{\varrho}^2
&=&
(\CP(\delta_2)\Phi^{g-1})^2.\\
&=&
\CP(\delta_2)\CP(\delta_3)\Phi^{2g-2}\\
&=&
\CP(\delta_4)t_A\\
&=&
t_B\\
&\overset{(\ref{bound})}{=}&
1.
\end{eqnarray*}
Thus the relation $(\overline{\mathrm{C3}})$ is satisfied in $\langle{X\mid{}Y}\rangle$.

\begin{figure}[htbp]
\subfigure[]{\includegraphics{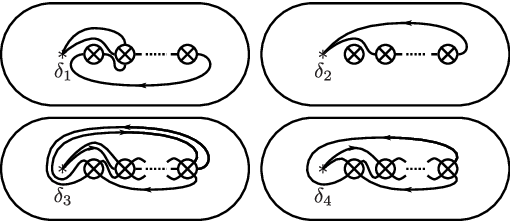}}
\subfigure[]{\includegraphics{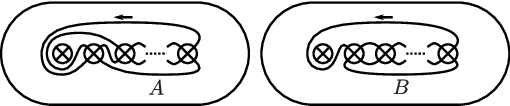}}
\caption{}\label{C3-loops}
\end{figure}

\subsubsection{On the relation $(\overline{\mathrm{C4}})$}\ 

Note that the relation $a_2\bar{\varrho}=\bar{\varrho}e^{-1}$ is obtained from the relations $(\overline{\mathrm{C2}}_3)$ and $(\overline{\mathrm{C3}})$.
We call this relation $(\overline{\mathrm{C2}}_4)$.
By the equality $\bar{\varrho}=(a_2a_3\cdots{}a_{g-1}e^{-1}a_3\cdots{}a_{g-1})^{\frac{g-2}{2}}a_2a_3\cdots{}a_{g-1}$ which appeared in Section~\ref{(C3)}, we calculate
\begin{eqnarray*}
(\bar{\varrho}a_2a_3\cdots{}a_{g-1})^{g-1}
&=&
\bar{\varrho}\underset{(\overline{\mathrm{C2}}_2),(\overline{\mathrm{C2}}_4)}{\underline{a_2a_3\cdots{}a_{g-1}\bar{\varrho}}}a_2a_3\cdots{}a_{g-1}(\bar{\varrho}a_2a_3\cdots{}a_{g-1})^{g-3}\\
&=&
\bar{\varrho}^2\underset{(\overline{\mathrm{C2}}_2),(\overline{\mathrm{C2}}_3),(\overline{\mathrm{C2}}_4)}{\underline{e^{-1}a_3\cdots{}a_{g-1}a_2a_3\cdots{}a_{g-1}\bar{\varrho}}}a_2a_3\cdots{}a_{g-1}(\bar{\varrho}a_2a_3\cdots{}a_{g-1})^{g-4}\\
&=&
\bar{\varrho}^3a_2a_3\cdots{}a_{g-1}e^{-1}a_3\cdots{}a_{g-1}a_2a_3\cdots{}a_{g-1}(\bar{\varrho}a_2a_3\cdots{}a_{g-1})^{g-4}\\
&\vdots&\\
&=&
\bar{\varrho}^{g-1}(a_2a_3\cdots{}a_{g-1}e^{-1}a_3\cdots{}a_{g-1})^{\frac{g-2}{2}}a_2a_3\cdots{}a_{g-1}\\
&=&
\bar{\varrho}^g\\
&\overset{(\overline{\mathrm{C3}})}{=}&
1.
\end{eqnarray*}
Thus the relation $(\overline{\mathrm{C4}})$ is satisfied in $\langle{X\mid{}Y}\rangle$.

Therefore well-definedness of $\psi$ follows.

\subsection{Surjectivity of $\psi$}\label{surj-psi}\

For any simple closed curve $d$ of $N_{g,n}$, we denote the complement of the interior of a regular neighborhood of $d$ by $N_{g,n}\setminus{d}$.
If $N_{g,n}\setminus{d}$ and $N_{g,d}\setminus{d^\prime}$ are diffeomorphic, there exists $x\in\M(N_{g,n})$ with $x(d^\prime)=d$ for $n\leq1$.
In particular, if $d$ and $d^\prime$ are two sided, we have $xt_{d^\prime;\theta^\prime}x^{-1}=t_{d;\theta}$ for some orientation $\theta$ and $\theta^\prime$.

For any two sided simple closed curve $d$ of $N_{g,n}$ where $n\leq1$, $N_{g,n}\setminus{d}$ is diffeomorphic to either one of
\begin{itemize}
\item	$N_{g-2,n+2}$, where $g\geq3$,
\item	$\Sigma_{\frac{g-2}{2},n+2}$, where $g$ is even,
\item	$\Sigma_{h,1}\sqcup{}N_{g-2h,n+1}$, where $\displaystyle0\leq{h}<\frac{g}{2}$,
\item	$N_{i,1}\sqcup{}N_{g-i,n+1}$, where $1\leq{i}\leq{}g-1$ and $g\geq2$,
\item	$N_{i,1}\sqcup\Sigma_{\frac{g-i}{2},n+1}$, where $1\leq{i}\leq{}g$ and $g-i$ is even.
\end{itemize}
For each case, we would like to find a word $w$ on $X_0$ such that $\psi(w)=t_{d;\theta}$, where $X_0$ is defined at the beginning of Section~\ref{main-thm-low-n}.

\subsubsection{The case where $N_{g,n}\setminus{d}$ is diffeomorphic to $N_{g-2,n+2}$ or $\Sigma_{\frac{g-2}{2},n+2}$}\

Since $N_{g,n}\setminus{d}$ is diffeomorphic to either $N_{g,n}\setminus{\alpha_1}$ or $N_{g,n}\setminus{\beta_\frac{g-2}{2}}$, there exists $x\in\M(N_{g,n})$ such that $xt_{d^\prime;\theta^\prime}x^{-1}=t_{d;\theta}$, where $t_{d^\prime;\theta^\prime}=a_1$ or $b_{\frac{g-2}{2}}$ respectively.
If $x\in\T(N_{g,n})$, there exists a word $x=x_1x_2\cdots{}x_s$ on $X_0$.
Then we obtain $\psi((x_1x_2\cdots{}x_s)t_{d^\prime;\theta^\prime}(x_1x_2\cdots{}x_s)^{-1})=t_{d;\theta}$, repeating the relation (\ref{t-conj}).
If $x\notin\T(N_{g,n})$, since $xy^{-1}\in\T(N_{g,n})$ by the sequence (\ref{TMZ2}) in Section~\ref{BT}, there exists a word $xy^{-1}=x_1x_2\cdots{}x_s$ on $X_0$.
Since $yt_{d^\prime;\theta^\prime}y^{-1}=a_1^{-1}$ or $\bar{b}_{\frac{g-2}{2}}$, we obtain $\psi((x_1x_2\cdots{}x_s)(yt_{d^\prime;\theta^\prime}y^{-1})(x_1x_2\cdots{}x_s)^{-1})=t_{d;\theta}$, repeating the relation (\ref{t-conj}).

\subsubsection{The case where $N_{g,n}\setminus{d}$ is diffeomorphic to $\Sigma_{h,1}\sqcup{}N_{g-2h,n+1}$}\

When $h=0$, since $t_{d;\theta}=1$ by the relation (\ref{bound}), we obtain $\psi(1)=t_{d;\theta}$.
When $h\geq1$, there exists $x\in\M(N_{g,n})$ such that $xt_{d^\prime;\theta^\prime}x^{-1}=t_{d;\theta}$, where $d^\prime$ is the boundary curve of a regular neighborhood of $\alpha_1\cup\alpha_2\cup\cdots\cup\alpha_{2h}$ which is diffeomorphic to $\Sigma_{h,1}$, as shown in Figure~\ref{3.2.2}.
Note that we have $t_{d^\prime;\theta^\prime}^\epsilon=(a_1a_2\cdots{}a_{2h})^{4h+2}$ for some $\epsilon=\pm1$, by the relation $(4)^\prime$.
If $x\in\T(N_{g,n})$, there exists a word $x=x_1x_2\cdots{}x_s$ on $X_0$.
Then we obtain $\psi((x_1x_2\cdots{}x_s)(a_1a_2\cdots{}a_{2h})^{\epsilon(4h+2)}(x_1x_2\cdots{}x_s)^{-1})=t_{d;\theta}$, repeating the relations (\ref{t-conj}) and $(4)^\prime$.
If $x\notin\T(N_{g,n})$, there exists a word $xy^{-1}=x_1x_2\cdots{}x_s$ on $X_0$.
Since $ya_1y^{-1}=a_1^{-1}$, $ya_2y^{-1}=e^{-1}$ and $ya_iy^{-1}=a_i$ for $i\geq3$, we obtain $\psi((x_1x_2\cdots{}x_s)(a_1^{-1}e^{-1}a_3\cdots{}a_{2h})^{\epsilon(4h+2)}(x_1x_2\cdots{}x_s)^{-1})=t_{d;\theta}$, repeating the relations (\ref{t-conj}) and $(4)^\prime$.

\begin{figure}[htbp]
\includegraphics{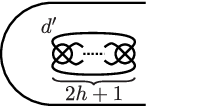}
\caption{}\label{3.2.2}
\end{figure}

\subsubsection{The case where $N_{g,n}\setminus{d}$ is diffeomorphic to $N_{i,1}\sqcup{}N_{g-i,n+1}$}\label{N_{i,1}-1}\

When $i=1$, since $t_{d;\theta}=1$ by the relation (\ref{bound}), we obtain $\psi(1)=t_{d;\theta}$.
When $i=2$, there exists $x\in\M(N_{g,n})$ such that $xy^2x^{-1}=xt_\delta{}x^{-1}=t_{d;\theta}$, where $\delta$ appeared in Figure~\ref{generator}.
If $x\in\T(N_{g,n})$, there exists a word $x=x_1x_2\cdots{}x_s$ on $X_0$.
Then we obtain $\psi((x_1x_2\cdots{}x_s)y^2(x_1x_2\cdots{}x_s)^{-1})=t_{d;\theta}$, repeating the relation (\ref{t-conj}).
If $x\notin\T(N_{g,n})$, there exists a word $xy^{-1}=x_1x_2\cdots{}x_s$ on $X_0$.
Since $y\cdot{}y^2\cdot{}y^{-1}=y^2$, we have $\psi((x_1x_2\cdots{}x_s)y^2(x_1x_2\cdots{}x_s)^{-1})=t_{d;\theta}$, repeating the relation (\ref{t-conj}).
When $i\geq3$, we take simple closed curves $d_1,\dots,d_6$ as shown in Figure~\ref{3.2.3}.
By induction on $i$, we can suppose that $t_{d_j;\theta_j}$ is described as a word on $X_0$ for $1\leq{j}\leq6$.
Hence we obtain $\psi(t_{d_1;\theta_1}^{\epsilon_1}t_{d_2;\theta_2}^{\epsilon_2}t_{d_3;\theta_3}^{\epsilon_3}t_{d_4;\theta_4}^{\epsilon_4}t_{d_5;\theta_5}^{\epsilon_5}t_{d_6;\theta_6}^{\epsilon_6})=t_{d;\theta}$ for some $\epsilon_j=\pm1$, by the relations (\ref{inverse}) and (\ref{lanterns}).

\begin{figure}[htbp]
\includegraphics{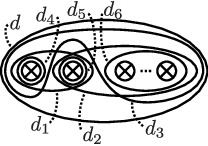}
\caption{}\label{3.2.3}
\end{figure}

\subsubsection{The case where $N_{g,n}\setminus{d}$ is diffeomorphic to $N_{i,1}\sqcup\Sigma_{\frac{g-i}{2},n+1}$}\

When $i=1$, since $t_{d;\theta}=1$ by the relation (\ref{bound}), we obtain $\psi(1)=t_{d;\theta}$.
When $i=2$, $d$ is described as shown in Figure~\ref{3.2.4}, for some model of $N_{g,n}$.
Let $\gamma_1$ and $\gamma_2$ be oriented loops of $N_{g-1,n}$ based at $\ast$ as shown in Figure~\ref{3.2.4}, where $\ast$ is the point obtained by the blowdown with respect to the crosscap $M$ in Figure~\ref{3.2.4}.
Note that $\gamma_1$ and $\gamma_2$ are two sided since $g$ is even when $i=2$.
In addition, when $\CP(\gamma_j)=t_{c_j}t_{c^\prime_j}^{-1}$, $N_{g,n}\setminus(c_j\cup{}c^\prime_j)$ is diffeomorphic to $N_{g,n}\setminus(\alpha_2\cup\zeta)$ for $j=1$ and $2$, where $\alpha_2$ and $\zeta$ appeared in Figure~\ref{generator}.
Hence by Remark~\ref{CP1}, we can describe $\CP(\gamma_j)=x_j(a_2f)x_j^{-1}$ for some $x_j\in\M(N_{g,n})$, for $j=1$ and $2$.
If $x_j\in\T(N_{g,n})$, $x_j$ is represented by a word on $X_0$, and hence $\CP(\gamma_j)$ is a word on $X_0$.
If $x_j\notin\T(N_{g,n})$, $x_jy^{-1}$  is represented by a word on $X_0$.
Since $y(a_2f)y^{-1}=e^{-1}a_2^{-1}$, $\CP(\gamma_j)$ is a word on $X_0$.
We denote the word presentation of $\CP(\gamma_j)$ on $X_0$ by $w_j$.
Then we calculate $\psi(w_2w_1)=w_2w_1\overset{(\ref{t-conj})}{=}\CP(\gamma_2)\CP(\gamma_1)=\CP(\gamma_1\gamma_2)\overset{(\ref{bound})}{=}t_{d;\theta}$, by Remarks~\ref{CP1} and \ref{CP2}.
When $i\geq3$, we take simple closed curves $d_1,\dots,d_6$ as shown in Figure~\ref{3.2.3}, similar to Section~\ref{N_{i,1}-1}.
Since $N_{g,n}\setminus{d_j}$ is diffeomorphic to either $N_{1,1}\sqcup{}N_{g-1,n+1}$, $N_{2,1}\sqcup{}N_{g-2,n+1}$, $N_{i-1,1}\sqcup{}N_{g-i+1,n+1}$ or $N_{i-2,1}\sqcup{}N_{g-i+2,n+1}$, by Section~\ref{N_{i,1}-1}, $t_{d_j;\theta_j}$ is described as a word on $X_0$ for $1\leq{j}\leq6$.
Hence we obtain $\psi(t_{d_1;\theta_1}^{\epsilon_1}t_{d_2;\theta_2}^{\epsilon_2}t_{d_3;\theta_3}^{\epsilon_3}t_{d_4;\theta_4}^{\epsilon_4}t_{d_5;\theta_5}^{\epsilon_5}t_{d_6;\theta_6}^{\epsilon_6})=t_{d;\theta}$ for some $\epsilon_j=\pm1$, by the relations (\ref{inverse}) and (\ref{lanterns}).

\begin{figure}[htbp]
\includegraphics{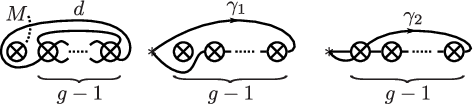}
\caption{}\label{3.2.4}
\end{figure}

Therefore surjectivity of $\psi$ follows, and hence the proof of Proposition~\ref{psi-isom} is completed.

\section{Proof of Theorem~\ref{main-thm} for $g\geq1$ and $n\geq2$}\label{main-thm-high-n}

Recall the capping map $\C$, the point pushing map $\PP$ and the forgetful map $\F$ defined in Section~\ref{CPFB}.
Let $\T^+(N_{g,n-1},\ast)=\C(\T(N_{g,n}))$, then for $g\geq1$ and $n\geq2$, from the sequences (\ref{MCES}) and (\ref{MBES}) in Section~\ref{CPFB}, we have the short exact sequences
\begin{eqnarray}
1\to\Z\to\T(N_{g,n})\overset{\C}{\to}\T^+(N_{g,n-1},\ast)\to1,\label{TCES}\\
1\to\pi_1^+(N_{g,n-1},\ast)\overset{\PP}{\to}\T^+(N_{g,n-1},\ast)\overset{\F}{\to}\T(N_{g,n-1})\to1.\label{TBES}
\end{eqnarray}
Using these sequences and the presentation of $\T(N_{g,1})$ given in Section~\ref{main-thm-low-n}, we give the presentation of $\T(N_{g,n})$ for $g\geq1$ and $n\geq2$, by induction on $n$.

Note that, in general, as basics on combinatorial group theory, from presented groups $\langle{G_1\mid{R_1}}\rangle$ and $\langle{G_3\mid{R_3}}\rangle$ with a short exact sequence $1\to\langle{G_1\mid{R_1}}\rangle\overset{i}{\to}{G}\overset{p}{\to}\langle{G_3\mid{R_3}}\rangle\to1$, we can give a presentation for $G$ as follows.
Let $\tilde{g}$ be any lift of $g\in{G_3}$ with respect to $p$, and $\tilde{r}$ a word obtained from $r\in{R_3}$ by replacing each $g\in{G_3}$ to $\tilde{g}$.
For $x\in\ker{p}$, denote by $w_x$ a word on $i(G_1)$ corresponding to $x$.
For $r\in{R_1}$, denote by $\bar{r}$ a word on $i(G_1)$ obtained from $r$ by replacing $h\in{G_1}$ to $i(h)$.
Let $G_2=\{i(h),\tilde{g}\mid{h\in{G_1},g\in{G_3}}\}$ and $R_2=\{\overline{r_1},\widetilde{r_3}w_{\widetilde{r_3}}^{-1},\tilde{g}i(h)\tilde{g}^{-1}w_{\tilde{g}i(h)\tilde{g}^{-1}}^{-1}\mid{r_1\in{R_1},r_3\in{R_3},h\in{G_1},g\in{G_3}}\}$.
Then we have a presentation $G=\langle{G_2\mid{R_2}}\rangle$.
For details, for instance see Proposition~1 in 10.2 in \cite{J}.

We now show the following proposition, using the sequence (\ref{TBES}).
Remember the infinite presentation $\langle{X\mid{Y}}\rangle$ for the group presented in Theorem~\ref{main-thm}.

\begin{prop}\label{main-thm-ast}
For $g\geq1$ and $n\geq2$, suppose $\T(N_{g,n-1})=\langle{X\mid{Y}}\rangle$,
then $\T^+(N_{g,n-1},\ast)$ admits a presentation with a generating set
$$\widetilde{X}=\left\{t_{\tilde{c},\tilde{\theta}}
\left|
\begin{array}{l}
\tilde{c}~\textrm{is a two sided simple closed curve of}~N_{g,n-1}\setminus\{\ast\}\\
\textrm{which does not bound a disk neighborhood of}~\ast,~\textrm{and}\\
\tilde{\theta}~\textrm{is an orientation of a regular neighborhood of}~\tilde{c}.
\end{array}
\right.
\right\}.$$
The defining relations are
\begin{enumerate}
\item[$(\tilde{1})$]	$t_{\tilde{c},\tilde{\theta}}=1$ if $\tilde{c}$ bounds a disk or a M\"obius band,
\item[$(\tilde{2})$]	$t_{\tilde{c};-_{\tilde{c}}}^{-1}=t_{\tilde{c};+_{\tilde{c}}}$,
\item[$(\tilde{3})$]	all the conjugation relations $ft_{\tilde{c},\tilde{\theta}}f^{-1}=t_{f(\tilde{c});f_\ast(\tilde{\theta})}$ for $f\in\widetilde{X}$,
\item[$(\tilde{4})$]	all the $2$-chain relations,
\item[$(\tilde{5})$]	all the lantern relations,
\item[$(\tilde{5})^\prime$]	all the extended lantern relations defined in Remark~\ref{PP2}.
\end{enumerate}
\end{prop}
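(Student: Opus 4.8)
The statement is an application of the Birman exact sequence (\ref{TBES}) together with the combinatorial group-theoretic recipe recalled just above the proposition. The plan is to feed into that recipe the two presentations $\pi_1(N_{g,n-1},\ast)=\langle G_1\mid R_1\rangle$ and $\T(N_{g,n-1})=\langle X\mid Y\rangle$ (the latter available by the inductive hypothesis), and then to simplify the resulting presentation via Tietze transformations until it becomes $\langle\widetilde{X}\mid(\tilde1)\text{--}(\tilde5)'\rangle$. First I would fix a standard one-relator-free presentation of the surface group $\pi_1(N_{g,n-1},\ast)$ on simple-loop generators, choosing the generators so that each one lies in $\pi_1^+$ (two sided) whenever possible and recording the image $\PP(\text{generator})$ via Remark~\ref{PP1} as a product $t_{\gamma_1;\theta_1}t_{\gamma_2;\theta_2}^{-1}$ of twists about curves in $\widetilde{X}$. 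For each generator $\tilde t_{c;\theta}$ of $\T(N_{g,n-1})$ I must also choose a lift to $\T^+(N_{g,n-1},\ast)$: the natural choice is to take the ``same'' twist $t_{\tilde c;\tilde\theta}$, where $\tilde c$ is a representative of $c$ disjoint from $\ast$ (such a representative exists and is unique up to the point-pushing ambiguity, which is exactly what the $w_x$-terms of the recipe will absorb).

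\textbf{Key steps in order.} Step one: produce the ``machine'' presentation of $\T^+(N_{g,n-1},\ast)$ from (\ref{TBES}) with generators $G_2 = \{\text{image of }\pi_1\text{-generators}\}\cup\{\text{lifts }t_{\tilde c;\tilde\theta}\}$ and relators $R_2$ consisting of (a) the surface-group relators $\overline{r_1}$ rewritten in the $\PP$-images, (b) the lifted relators $\widetilde{y}\,w_{\widetilde y}^{-1}$ for $y\in Y$, and (c) the conjugation relators $\widetilde g\,\PP(h)\,\widetilde g^{-1}\,w^{-1}$ expressing how a lifted mapping class acts on a point-pushing map. Step two: eliminate the $\pi_1$-generators. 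By Remark~\ref{PP1} each $\PP(\text{generator})$ is already a word in $\widetilde{X}$, so these are not new generators but defining \emph{words}; the surface-group relators in (a) become consequences of the conjugation relations $(\tilde3)$, the $2$-chain relations $(\tilde4)$, the lantern relations $(\tilde5)$ and — crucially — the \emph{extended} lantern relations $(\tilde5)'$, exactly as spelled out in Remark~\ref{PP2} (a transverse double point gives a relation from $(\tilde3)$, a tangential double point one from $(\tilde5)'$). Step three: check that the type-(b) relators $\widetilde y\,w_{\widetilde y}^{-1}$ reduce to relations already listed: since each $y\in Y$ is one of $(1)$--$(5)$ of Theorem~\ref{main-thm} and the lift of such a relation among twists is the ``same'' relation among the lifted twists up to a point-pushing correction $w_{\widetilde y}\in\PP(\pi_1)$, and since that correction curve bounds $\ast$ (it is null-homotopic away from $\ast$ because $y$ was a relation downstairs), $w_{\widetilde y}$ is a product of $\PP$ of loops bounding $\ast$, hence trivial by $(\tilde1)$ applied to curves bounding $\ast$ — or, where $w_{\widetilde y}$ genuinely records a push, it is again captured by $(\tilde5)'$. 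Step four: check the type-(c) conjugation relators are subsumed by $(\tilde3)$ plus $(\tilde5)'$, using that conjugating a point-pushing map by a mapping class is again a point-pushing map along the image loop — the ``change of coordinates'' $\PP(x)\mapsto f\PP(x)f^{-1}=\PP(f_\ast x)$ — which is precisely a conjugation relation among twists once one expands both sides via Remark~\ref{PP1}. Step five: verify the converse, i.e.\ that every relation in $(\tilde1)$--$(\tilde5)'$ holds in $\T^+(N_{g,n-1},\ast)$, which is immediate since $(\tilde1)$--$(\tilde5)$ are genuine topological identities among Dehn twists and $(\tilde5)'$ is the extended lantern relation, itself a consequence of $(\tilde1)$ and $(\tilde5)$ as noted in Remark~\ref{PP2}.

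\textbf{Main obstacle.} The delicate point is Step three together with the correct identification of the kernel words $w_x$: I must be careful that the ``obvious'' lift of a defining relation of $\T(N_{g,n-1})$ is not merely equal to the original relation, but may differ by an element of $\PP(\pi_1^+(N_{g,n-1},\ast))$, and I need to show every such discrepancy is already a consequence of $(\tilde1)$--$(\tilde5)'$. This is exactly the place where the \emph{extended} lantern relation $(\tilde5)'$ is indispensable and cannot be dropped — whenever two simple loops representing a product in $\pi_1$ must be isotoped to meet tangentially at $\ast$, the relation $\PP(\gamma)=\PP(\beta)\PP(\alpha)$ is only available through $(\tilde5)'$. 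A secondary subtlety is the exceptional case $(g,n-1)=(2,1)$, where $\PP$ fails to be injective (noted after (\ref{MBES})); there the sequence (\ref{TBES}) is not short exact on the nose and the argument needs the standard modification, replacing $\pi_1$ by its quotient by $\ker\PP$, which only removes relators and hence does not affect the target presentation. Apart from these two points the argument is bookkeeping: carefully track each generator and relator through the Reidemeister–Schreier-type recipe and match it against the list $(\tilde1)$--$(\tilde5)'$.
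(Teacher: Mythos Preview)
Your strategy is the paper's: feed the Birman sequence (\ref{TBES}) into the extension recipe and simplify. But your Step~3 has a gap, and the paper fills it with two devices you are missing.

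First, rather than a finite free presentation of $\pi_1^+(N_{g,n-1},\ast)$, the paper uses an \emph{infinite} one (from \cite{K}) whose generators are symbols $S_\alpha$ for \emph{every} simple loop $\alpha$ and whose relations are $S_{\alpha^{-1}}=S_\alpha^{-1}$, $S_\alpha S_\beta=S_\gamma$ when $\alpha\beta=\gamma$, and $S_\alpha S_\beta S_\alpha^{-1}=S_\gamma$ when $\alpha\beta\alpha^{-1}=\gamma$. These are precisely the relations Remark~\ref{PP2} converts into $(\tilde3)$ or $(\tilde5)'$, and with this choice the type-(c) relators read $t_{\hat c}\PP(S_\alpha)t_{\hat c}^{-1}=\PP(S_{t_{\hat c}(\alpha)})$, directly an instance of $(\tilde3)$ since $t_{\hat c}(\alpha)$ is again a generator. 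With a finite basis you would instead have to rewrite $\PP(f_\ast h_i)$ as a word in the $\PP(h_j)$ and then argue that this rewriting is a consequence of $(\tilde1)$--$(\tilde5)'$; that is not automatic.

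Second, for the lifts the paper fixes a simple path $P$ from $\ast$ to the $(n{-}1)$-st boundary component and takes each lift $\hat c$ disjoint from $P$. Then all the $t_{\hat c}$ live in an embedded copy of $N_{g,n-1}$ inside $N_{g,n-1}\setminus\{\ast\}$, so every relation $r\in Y$ lifts verbatim and $w_{\tilde r}=1$ on the nose. Your claim that ``$w_{\widetilde y}$ is a product of $\PP$ of loops bounding $\ast$, hence trivial by $(\tilde1)$'' is not correct: curves bounding $\ast$ are explicitly excluded from $\widetilde X$, relation $(\tilde1)$ says nothing about them, and for arbitrarily chosen lifts $w_{\widetilde y}$ can be a genuinely nontrivial push. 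The path $P$ is the missing idea; it is also what lets the paper show that every $t_{\tilde c}\in\widetilde X$ lies in the subgroup generated by the $t_{\hat c}$ and the $\PP(S_\alpha)$, by induction on $|\tilde c\cap P|$. Finally, your worry about injectivity of $\PP$ is unnecessary in this range: for $n\ge2$ the sequence (\ref{TBES}) is short exact as written.
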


\begin{proof}
$\pi_1^+(N_{g,n-1},\ast)$ is a finite rank free group for $n\geq2$.
However, we consider an infinite presentation for $\pi_1^+(N_{g,n-1},\ast)$.
Let $\pi$ be the group generated by symbols $S_\alpha$ for a non-trivial simple loop $\alpha\in\pi_1^+(N_{g,n-1},\ast)$, and with the defining relations
\begin{itemize}
\item	$S_{\alpha^{-1}}=S_\alpha^{-1}$,
\item	$S_{\alpha}S_{\beta}=S_{\gamma}$ if $\alpha\beta=\gamma$,
\item	$S_{\alpha}S_{\beta}S_{\alpha}^{-1}=S_\gamma$ if $\alpha\beta\alpha^{-1}=\gamma$.
\end{itemize}
Then we have that $\pi$ is isomorphic to $\pi_1^+(N_{g,n-1},\ast)$ (see Theorem~1.2 in \cite{K}).
So $\pi$ gives an infinite presentation for $\pi_1^+(N_{g,n-1},\ast)$.
By this we identify $\pi_1^+(N_{g,n-1},\ast)$ with $\pi$.

We take a simple path $P$ of $N_{g,n-1}$ between $\ast$ and the $(n-1)$-st boundary component.
For a simple closed curve $c$ of $N_{g,n-1}$, let $\hat{c}$ be a simple closed curve of $N_{g,n-1}\setminus\{\ast\}$ corresponding to $c$ which does not intersect $P$, as shown in Figure~\ref{lift-1}.

\begin{figure}[htbp]
\includegraphics{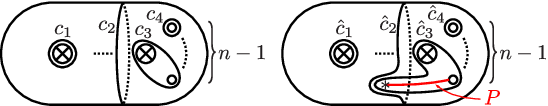}
\caption{Examples of simple closed curves of $N_{g,n-1}\setminus\{\ast\}$ corresponding to simple closed curves of $N_{g,n-1}$ which do not intersect the path $P$.}\label{lift-1}
\end{figure}

By the sequence (\ref{TBES}), $\T^+(N_{g,n-1},\ast)$ is generated by
\begin{itemize}
\item	$t_{\hat{c};\hat{\theta}}$ for any simple closed curve $c$ of $N_{g,n-1}$ and 
\item	$\PP(S_\alpha)$ for any generator $S_\alpha$ of $\pi$.
\end{itemize}
The defining relations are
\begin{itemize}
\item	$\tilde{r}=w_{\tilde{r}}$ for the lift of any $r\in{}Y$ with respect to $\F$,
\item	\begin{itemize}
	\item	$\PP(S_{\alpha^{-1}})=\PP(S_\alpha)^{-1}$,
	\item	$\PP(S_\beta)\PP(S_\alpha)=\PP(S_\gamma)$ for any $S_\alpha$, $S_\beta$ and $S_\gamma$ satisfying $\alpha\beta=\gamma$,
	\item	$\PP(S_\alpha)^{-1}\PP(S_\beta)\PP(S_\alpha)=\PP(S_\gamma)$ for any $S_\alpha$, $S_\beta$ and $S_\gamma$ satisfying $\alpha\beta\alpha^{-1}=\gamma$ and
	\end{itemize}
\item	$t_{\hat{c};\hat{\theta}}\PP(S_\alpha)t_{\hat{c};\hat{\theta}}^{-1}=w_{t_{\hat{c};\hat{\theta}}\PP(S_\alpha)t_{\hat{c};\hat{\theta}}^{-1}}$ for any $t_{\hat{c};\hat{\theta}}$ and $S_\alpha$,
\end{itemize}
where $w_x$ is a word corresponding to $x$ on $\{\PP(S_\alpha)\}$.
We denote this presentation by $\langle{\widetilde{X}_0\mid\widetilde{Y}_0}\rangle$.
In addition, let $\langle{\widetilde{X}\mid\widetilde{Y}}\rangle$ be the infinitely presented group with the presentation given in Proposition~\ref{main-thm-ast}.
We show that $\langle{\widetilde{X}\mid\widetilde{Y}}\rangle$ is isomorphic to $\langle{\widetilde{X}_0\mid\widetilde{Y}_0}\rangle=\T^+(N_{g,n-1},\ast)$.

Denote by $F(\widetilde{X}_0)$ the free group freely generated by $\widetilde{X}_0$.
Let $\widetilde{p}:F(\widetilde{X}_0)\to\langle{\widetilde{X}_0\mid\widetilde{Y}_0}\rangle$ be the natural projection and $\widetilde{\eta}:F(\widetilde{X}_0)\to\langle{\widetilde{X}\mid\widetilde{Y}}\rangle$ the homomorphism defined as $\widetilde{\eta}(x)=x$ for any $x\in\widetilde{X}_0$.
Note that, by Remark~\ref{PP1}, $\PP(S_\alpha)$ is also in $\langle{\widetilde{X}\mid\widetilde{Y}}\rangle$ for any generator $S_\alpha$ of $\pi$.
Hence $\widetilde{\eta}$ is well-defined.
We consider a correspondence
$$\widetilde{\psi}:\langle{\widetilde{X}_0\mid\widetilde{Y}_0}\rangle\to\langle{\widetilde{X}\mid\widetilde{Y}}\rangle$$
satisfying $\widetilde{\psi}\circ\widetilde{p}=\widetilde{\eta}$.
We show that $\widetilde{\psi}$ is an isomorphism.

First we show well-definedness of $\widetilde{\psi}$.
In the first relation of $\widetilde{Y}_0$, from the definition of $\hat{c}$, it is clear that $w_{\tilde{r}}=1$ for any $r\in{}Y$.
Hence the first relation of $\widetilde{Y}_0$ is either one of the relations~$(\tilde{1})$-$(\tilde{5})$.
In the second relation of $\widetilde{Y}_0$, the relation $\PP(S_{\alpha^{-1}})=\PP(S_\alpha)^{-1}$ is trivial.
The relation $\PP(S_\beta)\PP(S_\alpha)=\PP(S_\gamma)$ is obtained from the relations~$(\tilde{3})$ or $(\tilde{5})^\prime$ by Remark~\ref{PP2}.
The relation $\PP(S_\alpha)^{-1}\PP(S_\beta)\PP(S_\alpha)=\PP(S_\gamma)$ is the relation~$(\tilde{3})$.
In the third relation of $\widetilde{Y}_0$, we have $w_{t_{\tilde{c};\tilde{\theta}}\PP(S_\alpha)t_{\tilde{c};\tilde{\theta}}^{-1}}=\PP(S_{t_{\tilde{c};\tilde{\theta}}(\alpha)})$, and hence it is the relation~$(\tilde{3})$.
Therefore, any relation of $\widetilde{Y}_0$ is satisfied in $\langle{\widetilde{X}\mid\widetilde{Y}}\rangle$.
So $\widetilde{\psi}$ is well-defined as a homomorphism.

Next we show bijectivity of $\widetilde{\psi}$.
Let $\widetilde{\varphi}:\langle{\widetilde{X}\mid\widetilde{Y}}\rangle\to\langle{\widetilde{X}_0\mid\widetilde{Y}_0}\rangle=\T^+(N_{g,n-1},\ast)$ be the homomorphism defined as $\widetilde{\varphi}(x)=x$ for any $x\in\widetilde{X}$.
Since $\widetilde{\varphi}(\widetilde{Y})=1$ in $\T^+(N_{g,n-1},\ast)$ clearly, $\widetilde{\varphi}$ is well-defined.
By the definitions of $\widetilde{\psi}$ and $\widetilde{\varphi}$, it is clear that $\widetilde{\varphi}\circ\widetilde{\psi}$ is the identity map, and hence $\widetilde{\psi}$ is injective.
For any $t_{\tilde{c},\tilde{\theta}}\in\widetilde{X}$, if $\tilde{c}$ intersects $P$ transversally at $l\geq1$ points, there exist $t_{\tilde{c}^\prime,\tilde{\theta}^\prime}\in\widetilde{X}$ and $S_\alpha$ such that $t_{\tilde{c},\tilde{\theta}}=\widetilde{\psi}(\PP(S_\alpha))t_{\tilde{c}^\prime,\tilde{\theta}^\prime}$, as shown in Figure~\ref{lift-2}, by Remark~\ref{CP1}.
We notice that $\tilde{c}^\prime$ intersects $P$ transversally at $l-1$ points.
By induction on the intersection number $l$ of $\tilde{c}$ and $P$, we see that $t_{\tilde{c},\tilde{\theta}}$ is a product of some $\widetilde{\psi}(t_{\hat{c};\hat{\theta}})$ and some $\widetilde{\psi}(\PP(S_\alpha))$'s.
Hence $\widetilde{\psi}$ is surjective, and so bijective.

\begin{figure}[htbp]
\includegraphics{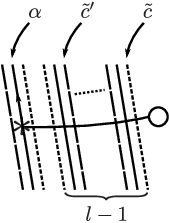}
\caption{$\tilde{c}$ and $\tilde{c}^\prime$ are the boundary curves of a regular neighborhood of $\alpha$.}\label{lift-2}
\end{figure}

Therefore $\widetilde{\psi}$ is the isomorphism.
Thus the claim is obtained.
\end{proof}

Finally we complete the proof of Theorem~\ref{main-thm}.

\begin{proof}[Proof of Theorem~\ref{main-thm}]
From the case where $n=1$ of Theorem~\ref{main-thm}, Proposition~\ref{main-thm-ast} and the sequence (\ref{TCES}), by induction on $n$, $\T(N_{g,n})$ is generated by
\begin{itemize}
\item	the natural lift $t_{c;\theta}$ of $t_{\tilde{c};\tilde{\theta}}$ with respect to $\C$ and
\item	the Dehn twist $t_{\partial_n;\theta_n}$ about the $n$-th boundary curve $\partial_n$,
\end{itemize}
that is, $X$ generate $\T(N_{g,n})$.
In addition $\T(N_{g,n})$ has the relations
\begin{itemize}
\item	$\tilde{r}=w_{\tilde{r}}$ for the lift of any relator $r$ of $\T^+(N_{g,n-1},\ast)$ with respect to $\C$ and
\item	$t_{c;\theta}t_{\partial_n;\theta_n}t_{c;\theta}^{-1}=w_{t_{c;\theta}t_{\partial_n;\theta_n}t_{c;\theta}^{-1}}$,
\end{itemize}
where $w_x=t_{\partial_n;\theta_n}^m$ for some integer $m$ corresponding to $x$.
In the first relation, it is clear that $w_{\tilde{r}}=1$ if $r$ is a relator corresponding to the relations~$(\tilde{1})$-$(\tilde{5})$ of Proposition~\ref{main-thm-ast}.
On the other hand, if $r$ is a relator corresponding to the relation~$(\tilde{5})^\prime$ of Proposition~\ref{main-thm-ast}, then $w_{\tilde{r}}=t_{\partial_n;\theta_n}^\epsilon$ for some $\epsilon=\pm1$.
Hence the first relation is either one of the relation~(\ref{bound})-(\ref{lanterns}).
In the second relation, it is clear that $w_{t_{c;\theta}t_{\partial_n;\theta_n}t_{c;\theta}^{-1}}=t_{\partial_n;\theta_n}$, and hence it is the relation~(\ref{t-conj}).

Thus we complete the proof.
\end{proof}

\section*{Acknowledgement}

The authors would like to express their gratitude to Susumu Hirose for his useful comments.


\end{document}